\newcommand{\Rset}{\mathbb R}
\newcommand{\Cset}{\mathbb C}
\newcommand{\C}{\mathcal C}
\newcommand{\Nset}{\mathbb N}
\newcommand{\dd}{\mathrm d}
\newcommand{\e}{\mathrm e}
\newcommand{\A}{\mathcal A}
\newcommand{\R}{\mathcal R}
\newcommand{\B}{\mathcal B}
\newcommand{\F}{\mathcal F}
\newcommand{\K}{\mathcal K}
\newcommand{\real}{\mathrm{Re\,}}
\newcommand{\Ker}{\mathrm{Ker\,}}
\def\qed{\hbox{\vrule width1.0ex height1.ex}\vspace{2mm}}
\newenvironment{proof}{\noindent{\bf Proof.}}{\qed \vskip 0.0ex}
\def\eqref#1{(\ref#1)}
\newtheorem{theorem}{Theorem}[section]
\newtheorem{proposition}[theorem]{Proposition}
\newtheorem{definition}[theorem]{Definition}
\newtheorem{lemma}[theorem]{Lemma}
\newtheorem{corollary}[theorem]{Corollary}
\date{}
\begin{document}

\title{Exact null controllability, complete stabilizability and final observability: the case of neutral type systems
\thanks{Published in \textit{Int. J. Appl. Math. Comput. Sci}, \textbf{27}(3), 489--499, 2017. 
DOI: {10.1515/amcs-2017-0034}}}
\author{ \textbf{Rabah {Rabah}}
\footnote{IRCCyN,  
IMT Atlantique,  Mines-Nantes, 4 rue Alfred Kastler, BP 20722, 44307 Nantes, France, e-mail: \texttt{rabah@emn.fr}}
\and \textbf{Grigory {Sklyar}}
 \footnote{Institute of Mathematics, University of Szczecin,
70451 Szczecin, Wielkopolska 15, Poland,   e-mail: \texttt{sklar@univ.szczecin.pl}}
\and \textbf{Pavel {Barkhayev}} 
\footnote{Institute for Low Temperature Physics and Engineering of the NAS,
   47 Lenin Ave.,  61103 Kharkiv, Ukraine,
 e-mail: {\tt barkhayev@ilt.kharkov.ua}}}

\maketitle
 
\newenvironment{exemple}{\begin{exemp} \em}{ \end{exemp}}
\bibliographystyle{dcu}

\begin{abstract}
 For abstract linear systems in Hilbert spaces we revisit the problems of exact controllability and complete
 stabilizability (stabilizability with an arbitrary decay rate), the latter property being related to exact null controllability.
 We consider also the case when the feedback is not bounded. We obtain a characterization of complete stabilizability for 
neutral type systems. Conditions of exact null controllability for  neutral type systems are discussed. 
By duality, we obtain a result about continuous final observability. Illustrative examples are given.
\end{abstract}
\textbf{Keywords:}
Exact null controllability, complete stabilizability, final observability, neutral type system.
\maketitle
\section{Introduction}
Consider the controlled neutral type system 
\begin{equation}\label{eq:1}
\dot z(t) = A_{-1}\dot z(t-1) +
Lz_t(\cdot) +Bu(t),
\end{equation}
where 
$$
Lz_t(\cdot)= \int^0_{-1}\left [A_2(\theta)\dot z(t+\theta) + A_3(\theta) z(t+\theta)\right ]{\mathrm d}\theta,
$$
with $z(t) \in \Rset^n$, $u(t) \in \Rset^m$, and the matrices $A_{-1}$, $A_{2}$, $A_{3}$ and 
 $B$ are of appropriate dimensions. The elements of $A_{2}$ and  $A_{3}$ take values in $L_2(-1,0)$.

System $(\ref{eq:1})$ may be represented in a Hilbert space by the equation
$$
   \dot x(t)= {\mathcal A} x(t) + {\mathcal B}u(t),
$$
where ${\mathcal B}u=(Bu,0)$ and ${\mathcal A}$ is the infinitesimal generator of a $C_0$-semigroup  $\e^{\A t}$
given in the product space 
$$
M_2(-1,0;\Rset^n) \stackrel{\mathrm{def}}{=} \Rset^n\times L_2(-1,0;\Rset^n),
$$
noted shortly $M_2$,  and defined by
$$
{\mathcal A} x(t)=
\begin{pmatrix}
Lz_t(\cdot) \cr
 \frac{{\mathrm d} z_t(\theta)}{{\mathrm d}\theta}
\end{pmatrix}, \quad
  x(t)= \begin{pmatrix}
v(t)\cr  z_t(\cdot)
\end{pmatrix},
$$
with the domain $D({\A})$ given by
$$
D({\A})=\left \{
( v,  \varphi)
: \varphi(\cdot) \in
   H^1, v=\varphi(0)-A_{-1}\varphi(-1)\right\}.
$$

\medskip 

Our purpose is to analyze exact  null control\-la\-bility of delay systems of neutral 
type  (\ref{eq:1}), to show the relation with the complete stabilizability 
(exponential stabilizability with an arbitrary decay rate)
of the system and, by duality, to give condi\-tions of the exact  final observability of such a system with  an output
$y(t)=Cz(t)$ or $y(t)=Cz(t-1)$, where $y(t)$ takes values in $\Rset^p$.

The problem of exact controllability for systems of neutral type has been widely investigated.  References  
and important results for system (\ref{eq:1}) can be found in the work of
\cite{Rabah_Sklyar_2007}.
A simplification and impro\-vement of some details of the proofs are given in \cite{Rabah_Sklyar_Barkhayev_2016}. The duality
 with exact (continuous) obser\-vability is analyzed in \cite{Rabah_Sklyar_2016}. For the stab\-ili\-zability problem, after 
the first important works \cite{Pandolfi_1976,Oconnor_Tarn_1983}, there were many results on the stabilizability of
 delay systems (see, for example,  \cite{Richard_2003,Michiels_Niculescu_2007} and references therein) 
but neutral type systems have been  less investigated \cite{Pritchard_Salamon_1987,Salamon_Pitman_1984}. 
In \cite{Hale_Verduyn_2002} the main scheme of stabilizing neutral type systems and   the robustness, with respect to the delays,  
of the stabilizing feedback 
were  analyzed.  The problem of asymptotic non\-exponential  stabilizability, which appears only for neutral type systems, 
was treated in \cite{Rabah_Sklyar_Rezounenko_2008,Rabah_Sklyar_Barkhayev_2012}, such problem occurs for some  systems governed by partial differential 
equations (see for example  \cite{Sklyar_Szkiebiel_2013}).

\medskip

This paper is organized as follows. In Section~\ref{prel} we give results on the rel\-ation between 
exact null cont\-rol\-lability and comp\-lete stabil\-izability for abs\-tract systems in Hilbert spaces.  
In Section \ref{neutral}, we give necessary conditions of  exact null controllability and we characterize 
complete stabilizability for neutral type systems. 
Then we form\-ulate a conjecture on the equivalence between exact null controllability and complete stabili\-zability for neutral type systems.
Section \ref{observ} is concerned with the dual notion of 
observability: final continuous observability.
 \section{Preliminary results}\label{prel}
In this section we consider  the abstract system
\begin{equation}\label{eq:absys}
 \dot x = \A x + \B u
\end{equation}
where the linear  operator $\A$, with domain $D(\A)$, is the infinitesimal generator of a $C_0$-semigroup $\e^{\A t}$ in the Hilbert space 
$X$ and $\B$ is a linear operator, which
may be unbounded but admissible  (see, for example,  \cite{Tucsnak_Weiss_2009}), from the Hilbert space $U$ to $X$.
\subsection{Bounded input and feedback}
Let us first sup\-pose that the operator $\B$ is bounded. The solution of the system (\ref{eq:absys}) with the initial condition $x_0$ and the control 
$u(t) \in L_2^{\mathrm {loc}}(\Rset^+;U)$ is given by
$$
x(t)= \e^{\A t}x_0+ \int_0^t\e^{\A (t-\tau)}Bu(\tau) \dd \tau.
$$
The following notions are well known (see for example \cite{Curtain_Zwart_1995}).
\begin{definition}{}\label{ex_contr}
System (\ref{eq:absys}) is said to be exactly controllable at time $T$ if for all $x_0, x_1 \in X$, there is a control 
$u(t) \in L_2(0,T;U)$ such that the corresponding solution of the system verifies $x(T)=x_1$.
The system is said to be exactly null controllable if in the preceding definition $x_1=0$.
\end{definition}
There are several results about   exact (null) controllability. For example, it is well known that if $\B$ is compact, par\-ticularly
if $U$ is finite dimensional, then there is no exact controllability (first proved in \cite{Kuperman_Repin_1971}, see also \cite{Curtain_Zwart_1995} and references 
therein). Another condition of exact control\-lability, in the case of the bounded operator $\B$,
 is that for all $t\ge 0$,  the operator $\e^{\A t}$ is onto (surjective) \cite{Louis_Wexler_1983}. 

In what follows, we need the following criteria of exact (null) controllability \cite{Curtain_Zwart_1995}.
\begin{theorem}{} \label{th:excont}
 System (\ref{eq:absys}) is exactly null controllable at time $T$ if and only if
$$
\exists \delta >0 : \ \forall x \in X, \ \int_0^T\left \Vert B^* \e^{\A^* t}x\right \Vert^2 \dd t \ge \delta^2 \left \Vert \e^{\A^* T} x\right \Vert^2.
$$
For the condition of  exact controllability, the operator $\e^{\A^* T}$ must be replaced by the identity $I$  in the right part of the inequality.
\end{theorem}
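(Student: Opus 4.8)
The plan is to prove this as a duality statement about the adjoint system, so the first thing I would do is recall the precise meaning of exact null controllability in terms of the reachability map. Define the finite-horizon input-to-state map $\Phi_T u = \int_0^T \e^{\A(T-\tau)}Bu(\tau)\,\dd\tau$ acting on $L_2(0,T;U)$. Exact null controllability at time $T$ means that for every $x_0\in X$ there is a control $u$ with $\e^{\A T}x_0 + \Phi_T u = 0$, i.e. the range of $\Phi_T$ contains $\e^{\A T}x_0$ for all $x_0$; equivalently, $\mathrm{Range}(\e^{\A T})\subseteq \mathrm{Range}(\Phi_T)$. So the entire statement reduces to characterizing this range inclusion between two bounded operators.

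The key algebraic tool is the standard range-inclusion lemma: for bounded operators $R,S$ on Hilbert spaces with common target $X$, one has $\mathrm{Range}(S)\subseteq\mathrm{Range}(R)$ if and only if there exists $\delta>0$ such that $\|S^* x\|\le \frac{1}{\delta}\|R^* x\|$ for all $x\in X$ (this is the operator-factorization theorem of Douglas, combined with a closed-graph / open-mapping argument to get the quantitative constant). I would apply this with $S=\e^{\A T}$ and $R=\Phi_T$. Then I need the two adjoints explicitly: $S^* = \e^{\A^* T}$, and for the reachability map a direct computation gives $(\Phi_T^* x)(\tau) = B^*\e^{\A^*(T-\tau)}x$ as an element of $L_2(0,T;U)$, so that
$$
\|\Phi_T^* x\|^2_{L_2(0,T;U)} = \int_0^T \left\Vert B^*\e^{\A^*(T-\tau)}x\right\Vert^2\dd\tau = \int_0^T \left\Vert B^*\e^{\A^* t}x\right\Vert^2\dd t
$$
after the substitution $t=T-\tau$. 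Substituting both adjoints into the range-inclusion inequality yields exactly the stated estimate
$$
\int_0^T\left\Vert B^*\e^{\A^* t}x\right\Vert^2\dd t \ge \delta^2\left\Vert \e^{\A^* T}x\right\Vert^2,
$$
and the final sentence of the theorem follows by taking $S=I$ (exact controllability means $\mathrm{Range}(\Phi_T)=X$, i.e. $\mathrm{Range}(I)\subseteq\mathrm{Range}(\Phi_T)$), giving $\|x\|$ in place of $\|\e^{\A^* T}x\|$ on the right.

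The main obstacle I expect is the forward direction's quantitative constant $\delta$, that is, upgrading the mere range inclusion to a uniform inequality with a fixed $\delta>0$. The inclusion $\mathrm{Range}(S)\subseteq\mathrm{Range}(R)$ by itself only gives, for each $x$, \emph{some} preimage; to extract a single $\delta$ working for all $x$ one factors $S = RC$ for a bounded operator $C$ (Douglas factorization) and takes $\delta = 1/\|C\|$, the boundedness of $C$ being the content that requires the closed-graph theorem applied to the densely defined factorization. The reverse direction is softer: the inequality $\|S^*x\|\le\delta^{-1}\|R^*x\|$ shows the map $R^*x\mapsto S^*x$ is well defined and bounded on $\mathrm{Range}(R^*)$, extends to its closure, and dualizes back to the inclusion. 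Two technical points deserve care along the way: justifying the closed-range / well-definedness issues when $\B$ is merely admissible rather than bounded (so that $\Phi_T$ still maps $L_2(0,T;U)$ boundedly into $X$ and $\Phi_T^*$ has the claimed pointwise form), and confirming that the substitution $t=T-\tau$ is the only change of variable needed to pass from the reachability-map norm to the stated integral.
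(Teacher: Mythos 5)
Your proposal is correct and follows exactly the route the paper indicates: it gives no written proof of Theorem~\ref{th:excont}, citing Curtain--Zwart for the statement and noting that "the characterization of exact null controllability is due to a result on range inclusion in Hilbert spaces" (Douglas), which is precisely your reduction of null controllability to $\mathrm{Im}\,\e^{\A T}\subseteq\mathrm{Im}\,\Phi_T$ followed by the Douglas factorization and the computation of $\Phi_T^*$. Your handling of the quantitative constant $\delta$ via the boundedness of the Douglas factor is the standard and correct way to close the forward direction.
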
 
The characterization of exact null controllability is due to a result on range inclusion in 
Hilbert spaces \cite{Douglas_1966}.

We also need  some notions of stabilizability.
\begin{definition}{}\label{def:stabilizab}
 System (\ref{eq:absys}) is said to be exponentially stabilizable if there is a linear bounded feedback operator  $\F$ such that the semigroup 
$\e^{(\A +\B \F)t}$ is exponentially stable: there is a $\omega > 0$ such that
\begin{equation}    \label{exp-stab}
\left \Vert \e^{(\A +\B \F)t}\right\Vert \le M_{\omega}\e^{-\omega t}, \quad M_{\omega} \ge 1.
\end{equation}
The system is said to be  completely stabilizable (or stabilizable with an arbitrary decay rate) if for all $\omega >0$ there is 
a linear bounded feedback $\F_\omega$ such that (\ref{exp-stab}) holds.
\end{definition}

\medskip

The relation between exact controllability and stab\-ili\-zability is as follows:
exact null controllability implies exponential  stabilizability.  If $\e^{\A t}$ is a group, complete stabilizability  implies exact controllability as shown in
\cite[Theorem 3.4, p. 229]{Zabczyk_1992}. Note that the original proof was obtained by \cite{Zabczyk_1976} which extends the result 
obtained by \cite{Megan_1975}. 
The same result was proved in  \cite{Rabah_Karrakchou_1997,Zeng_Yi_Xie_2013} for the case of a semigroup $\e^{\A t}$ provided that 
the operators $\e^{\A t}$ are surjective for all $t\ge 0$.

We were tempted to extend  this latter result to ex\-act null control\-lability, possibly under some additional conditions. 
However the situation is not so simple. We have the following implication, but the converse is not true.
\begin{theorem}{}\label{th:Fbounded}
If system (\ref{eq:absys})    is exactly null controllable, then it is completely stabilizable by  a bounded feedback
$\F$. 
\end{theorem}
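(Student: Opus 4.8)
The plan is to show that exact null controllability allows us to achieve an arbitrary prescribed decay rate, and in particular to find at least one bounded stabilizing feedback. The natural strategy is to exploit the characterization in Theorem~\ref{th:excont} together with a standard quadratic-cost (LQR-type) construction. Concretely, I would fix the null-controllability time $T$ and consider, for each initial state $x_0$, the optimal control problem of minimizing $\int_0^\infty \e^{2\omega t}\bigl(\Vert u(t)\Vert^2 + \Vert x(t)\Vert^2\bigr)\dd t$ subject to $(\ref{eq:absys})$, for a chosen $\omega>0$. The key point is that exact null controllability guarantees that the value function is finite for every $x_0$: one can steer $x_0$ to the origin on $[0,T]$ with an $L_2$ control, extend by zero thereafter, and the resulting cost is finite because $x(t)\equiv 0$ for $t\ge T$ kills the tail of the weighted integral.

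**First I would** record the finiteness of the cost precisely. Using the Theorem~\ref{th:excont} inequality, the control steering $x_0$ to $0$ in time $T$ can be chosen with $\Vert u\Vert_{L_2(0,T)}$ bounded linearly by $\Vert x_0\Vert$, and the corresponding trajectory on $[0,T]$ is bounded by $\Vert x_0\Vert$ as well (by boundedness of $\e^{\A t}$ on the compact interval together with admissibility of $\B$). Hence the weighted cost $J_\omega(x_0)$ is finite and in fact $J_\omega(x_0)\le c_\omega\Vert x_0\Vert^2$ for some constant $c_\omega$. This coercivity-from-above of the value function is the crucial input that the general theory needs.

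**Next I would** invoke the standard infinite-horizon LQR result for $C_0$-semigroups: when the optimal cost is finite and bounded above by $c_\omega\Vert x_0\Vert^2$, there is a nonnegative self-adjoint solution of the associated algebraic Riccati equation, and the feedback $\F_\omega = -\B^*\Pi_\omega$ is bounded and renders the closed-loop semigroup $\e^{(\A+\B\F_\omega)t}$ exponentially stable. Passing back from the exponentially weighted problem to the original one, the weight $\e^{2\omega t}$ forces the closed-loop decay rate to exceed $\omega$, so that $(\ref{exp-stab})$ holds with the prescribed $\omega$. Since $\omega>0$ was arbitrary, this yields complete stabilizability, and a fortiori stabilizability by a single bounded feedback as stated. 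Care must be taken at the level of the unbounded but admissible $\B$: the admissibility hypothesis is exactly what keeps $\B^*\e^{\A^* t}x_0$ in $L_2(0,T;U)$ and makes $\F_\omega=-\B^*\Pi_\omega$ bounded, so I would verify these regularity points explicitly rather than quoting the bounded-$\B$ theory verbatim.

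**The main obstacle** I anticipate is precisely the handling of the unbounded feedback operator and the well-posedness of the closed-loop system: with $\B$ only admissible, one must ensure that $\A+\B\F_\omega$ still generates a $C_0$-semigroup and that the Riccati machinery applies in this setting. A clean alternative that sidesteps part of this difficulty is to use the Theorem~\ref{th:excont} inequality directly to build the feedback via the controllability Gramian on $[0,T]$, defining $\F$ through the inverse of a suitable observability operator; the null-controllability inequality is exactly the invertibility condition that makes this construction go through, and it more transparently produces a bounded feedback. Either route reduces the theorem to the finiteness/coercivity estimate derived from Theorem~\ref{th:excont}, which is the heart of the matter.
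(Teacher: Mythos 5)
Your proposal is correct and follows essentially the same route as the paper: the exponentially weighted cost $\int_0^\infty \e^{2\omega t}\left(\Vert u\Vert^2+\Vert x\Vert^2\right)\dd t$ is, after the change of variables $\tilde x=\e^{\omega t}x$, $\tilde u=\e^{\omega t}u$, precisely the paper's unweighted finite-cost condition for the shifted system $\dot x=(\A+\omega I)x+\B u$, whose exact null controllability follows from that of the original system, and both arguments then invoke the standard finite-cost-condition/Riccati theorem (Zabczyk) to obtain a bounded feedback with decay rate at least $\omega$. Note only that Theorem~\ref{th:Fbounded} is stated in the subsection where $\B$ is assumed bounded, so your cautionary remarks about admissible unbounded $\B$ concern the later Corollary~\ref{cor:1} rather than this theorem.
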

\begin{proof}{}
Suppose that the system is exactly null control\-lable at time $T$. Then 
$$
\forall x_0 \in X, \ \exists u(\cdot) \in L_2(0,T;U) : \quad x(T,x_0, u(\cdot)) =0,
$$
where $x(t) = x(t,x_0, u(\cdot)) $ is the solution with the initial condition $x_0$ and the control $u(t)$:
$$
 x(t,x_0, u(\cdot))= \e^{\A t}x_0 + \int_0^t \e^{\A (t-\tau)}\B u(\tau) \dd \tau.
$$
 Then for every $x_0 \in X$, there exists $u(\cdot) \in L_2(0,\infty;U)$ such that
$$
 \int_0^{+ \infty}\left ( \Vert x(t)\Vert^2 +  \Vert u(t)\Vert^2 \right)\dd t < \infty.
$$
This means that the system is exponentially stabilizable \cite[Th. 3.3, p. 227]{Zabczyk_1992} :
$$
\exists F_{\omega_0} \in \mathcal {L}(U,X) : \ \left \Vert \e^{(\A +\B \F_{\omega_0})t}\right \Vert \le M_{\omega_0}\e^{-\omega_0 t}, \ \omega_0 > 0.
$$
On the other hand, the  exact null controllability of  system  (\ref{eq:absys}) is equivalent to the  exact null 
controllability of the system
$$
\dot x = (\A +\omega I) x + \B u, \quad \omega>0.
$$
This means that for all $\omega>0$, for some  $\mu_\omega > 0$, there is $\F_{\omega} \in \mathcal {L}(U,X)$
such that
$$
 \left \Vert \e^{(\A +\B \F_{\omega})t}\right \Vert \le M_{\mu_\omega} \e^{-(\mu_\omega + \omega) t}
\le M_\omega \e^{- \omega t}.
$$
\end{proof}

In order to explain the fact that the converse is not true and that the situation is more complicated, 
we give examples of two systems without control, where the semigroups are exponentially   stable  with arbitrary decay rate, 
but where the states may or may not  reach the null state in finite time. These examples can be found in \cite{Rhandi_2002} 
in the spaces of continuous functions.
\subsection*{Example 1.}
In the space $L_2(0,+\infty)$, consider the semi\-group 
$$
S(t)f(x) = \e^{-\frac{t^2}{2}- x t} f(x+t), \quad  t\ge 0, \quad x \ge 0.
$$ 
It is not difficult to see that for this semigroup, for all $\omega>0$, there is a constant $M_\omega \ge 1$  
such that $\| S(t)\| \le M_\omega \e^{-\omega t}$. We have also $\sigma(S(t))= \{0\}$ and then the spectrum of the 
infinitesimal generator is empty.   On the other hand, there are initial  conditions $f$ such that $S(t)f \ne 0$ for any $t \ge 0$.
\subsection*{Example 2.}
In the space $L_2(0,1)$, consider the semi\-group 
$$
S(t)f(x) = 
\left \{
\begin{array}{cc}
 f(x+t) & 0\le t+x\le 1,  \cr
0 &  t+x > 1.
\end{array}
\right.
$$ 
It is not difficult to see that for this semigroup, for all $\omega>0$, there is a constant $M_\omega \ge 1$  
such that $\| S(t)\| \le M_\omega \e^{-\omega t}$. We have also $\sigma(S(t))= \{0\}$ and then the spectrum of the 
infinitesimal generator is empty.  But, for any initial function $f \in L_2(0,1)$, we have $S(t)f(x)=0$ for $t >2$. This means that
$S(t)=0$, for all $t >2$. Then, for any control operator $\B$, the corresponding system is 
exactly null controllable at time $T>2$ with the trivial control $u=0$.

\subsection{Unbounded input and feedback operators}
For some control systems, the input operator $\B$ may not be bounded and it is very restrictive to assume that
the feedback operator $\F$ is bounded.  For a general theory on systems with unbounded control and observation 
we refer to the paper \cite{Salamon_1987}. For the subclass of interest, which includes linear neutral type systems  
  we refer to \cite{Pritchard_Salamon_1987} and \cite{Curtain_L_T_Z_1997,Guo_Z_H_2003}).

As our final goal is to analyze exact null control\-labi\-lity and complete stabilizability for neutral type sys\-tems, 
we will now consider a wider context of systems with unbounded input and output operators. However, the situation is much more complicated, 
even if some extension may be considered.

Let $X_1$ be $D(\A)$ endowed with the graph norm noted $\|x\|_1$ and $X_{-1}$  be the completion of the space $X$ 
with respect to the resolvent norm 
$$
\|x\|_{-1}=\left \|(\lambda I- \A)^{-1}x\right\|_{X}, \quad \lambda \in \rho(A).
$$
 We have the following relation
$$
X_1 \subset X  \subset X_{-1},
$$
with continuous dense injections. 
\begin{definition}{}\label{admis}
 Let $\B$ be a linear operator,  bounded from the Hilbert space $U$ to $X$. 
We say that $\B$ is an admissible input operator for the semigroup $\e^{\A t}$ if there exists $t_1$ such that 
$$
\int_0^{t_1}\e^{\A (t_1-\tau)}\B u(\tau) \dd \tau \in X_1,
$$
and for some $\beta>0$
$$
\left \Vert \int_0^{t_1}\e^{\A (t_1-\tau)}\B u(\tau) \dd \tau \right \Vert_{X_1} \le \beta \Vert u \Vert_{L_2(0,t_1)}.
$$
\end{definition}
\begin{definition}{}
 Assume that  operator $\F$ is a linear operator,  bounded from $X_{1}$ to the Hilbert space $Y$.
We say that it is an admissible output operator for the semigroup $\e^{\A t}$ if there exists $t_1>0$ such that 
 for some $\alpha>0$
$$
\left \Vert \F \e^{\A (t_1-\tau)}x \right  \Vert_{L_2(0,t_1)} \le \alpha \Vert x \Vert_{X}, \quad x \in X_1.
$$
\end{definition}
Admissibility for some $t_1$ implies admissibility for all $t>0$  
(see for example  \cite{Curtain_L_T_Z_1997}). 
From the general result on the perturbation of semigroup from the Pritchard-Salamon class, we can deduce the
 following Cauchy formula for the perturbed semigroup $\e^{(\A+\B \F)t}$, for admissible input and output operators $\B$ and 
$\F$: 
\begin{equation}\label{perturb}
 \e^{(\A+\B \F)t}x = \e^{\A t}x+ \int_0^t  \e^{\A (t-\tau)} \B \F \e^{(\A+\B \F)\tau}x \dd \tau,
\end{equation}
for all $x \in X_1$. Moreover $\e^{(\A+\B \F)t}$ extends to  a $C_0$-semi\-group on $X$.   

This means that Definition~\ref{def:stabilizab} may be reformulated for an admissible input operator 
and admissible output feedback.
\begin{theorem}{}\label{th:Funbounded}
If the system (\ref{eq:absys})  with an admissible  operator $\B$  is completely stabilizable by  an 
admissible  $\A$-bounded feedbacks then it is completely stabilizable by a bounded linear feedback  $\F$.
\end{theorem}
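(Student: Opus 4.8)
The plan is to start from the arbitrary-rate hypothesis, fix a target rate $\omega>0$, and manufacture a \emph{bounded} feedback attaining it by regularizing an admissible one. Concretely, choose a strictly larger rate, say $\omega'=\omega+1$; by complete stabilizability with admissible feedbacks there is an admissible $\A$-bounded feedback $G:=\F_{\omega'}$ with $\|\e^{(\A+\B G)t}\|\le M'\e^{-\omega' t}$. Write $\mathcal G=\A+\B G$ for the closed-loop generator; the reformulated Cauchy formula (\ref{perturb}) applies to $\mathcal G$, and, since admissibility is preserved under admissible feedback perturbations in the Pritchard--Salamon class, $\B$ stays an admissible input operator and $G$ an admissible output operator for $\e^{\mathcal G t}$. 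First I would replace $G$ by the Yosida-type regularizations $G_\lambda:=\lambda G(\lambda I-\A)^{-1}$ for real $\lambda\to+\infty$. Because $(\lambda I-\A)^{-1}$ maps $X$ boundedly into $X_1$ and $G$ is bounded from $X_1$, each $G_\lambda$ is a genuinely \emph{bounded} feedback on $X$, and $G_\lambda x\to Gx$ for every $x\in X_1$.

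The heart of the matter is to show that, for $\lambda$ large, the bounded feedback $G_\lambda$ already stabilizes with rate at least $\omega$. Since $X$ is a Hilbert space I would argue through the Gearhart--Pr\"uss criterion: $\e^{(\A+\B G_\lambda)t}$ decays with rate larger than $\omega$ as soon as $\{\real\mu>-\omega\}\subset\rho(\A+\B G_\lambda)$ and the resolvent is uniformly bounded there. The stability of $\e^{\mathcal G t}$ already yields $\sup_{\real\mu>-\omega}\|(\mu I-\mathcal G)^{-1}\|=:K<\infty$, and the domain-correct resolvent identity
$$
(\mu I-\A-\B G_\lambda)^{-1}-(\mu I-\mathcal G)^{-1}=(\mu I-\A-\B G_\lambda)^{-1}\,\B\,(G_\lambda-G)\,(\mu I-\mathcal G)^{-1}
$$
reduces everything, via a Neumann/bootstrap argument in which $\|(\mu I-\A-\B G_\lambda)^{-1}\B\|$ is controlled by input admissibility, to making the scalar quantity $\|(G_\lambda-G)(\mu I-\mathcal G)^{-1}\|_{X\to Y}$ strictly less than a fixed fraction, \emph{uniformly} in $\mu$ on the half-plane. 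Using $(G_\lambda-G)|_{X_1}=G\,(\lambda I-\A)^{-1}\A$ and that $(\mu I-\mathcal G)^{-1}$ lands in $X_1$, this factor equals $G(\lambda I-\A)^{-1}\A(\mu I-\mathcal G)^{-1}$, and $\|G(\lambda I-\A)^{-1}\|\le c(\lambda-\omega_0)^{-1/2}\to0$ is exactly the square-root resolvent bound that admissibility of $G$ guarantees on a Hilbert space, uniformly in $\mathrm{Im}\,\mu$.

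The hard part will be the uniformity of this estimate across the \emph{whole} half-plane. The small factor $\|G(\lambda I-\A)^{-1}\|=O(\lambda^{-1/2})$ is paired with $\|\A(\mu I-\mathcal G)^{-1}\|$, which grows like $|\mu|$ as $|\mathrm{Im}\,\mu|\to\infty$, so the product is small only on bounded frequency ranges. I would therefore split $\{\real\mu>-\omega\}$ into a low-frequency region $|\mu|\le R$, where simply taking $\lambda$ large annihilates the perturbation, and a high-frequency region $|\mu|>R$. Since $\A$ may be unstable, one cannot route the high-frequency estimate through $(\mu I-\A)^{-1}$; instead I would keep everything relative to the \emph{stable} generator $\mathcal G$, exploiting the square-root admissibility bounds $\|(\mu I-\mathcal G)^{-1}\B\|$ and $\|G(\mu I-\mathcal G)^{-1}\|=O((\real\mu+\omega')^{-1/2})$ (uniform in $\mathrm{Im}\,\mu$), and reorganizing $(G_\lambda-G)(\mu I-\mathcal G)^{-1}$ through the resolvent identity linking $(\lambda I-\A)^{-1}$, for large real $\lambda\in\rho(\A)$, with $(\mu I-\mathcal G)^{-1}$, so as to gain the decay without ever invoking the unbounded factor $\A(\mu I-\mathcal G)^{-1}$.

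Assembling the two regimes gives $\|(G_\lambda-G)(\mu I-\mathcal G)^{-1}\|<\tfrac12$ for $\lambda$ large enough, uniformly on $\{\real\mu>-\omega\}$; the Neumann series then keeps $(\mu I-\A-\B G_\lambda)^{-1}$ uniformly bounded there, and Gearhart--Pr\"uss delivers $\|\e^{(\A+\B G_\lambda)t}\|\le M_\omega\e^{-\omega t}$. As $\omega>0$ was arbitrary, the bounded feedbacks $\F_\omega:=G_\lambda$ witness complete stabilizability of (\ref{eq:absys}) by a bounded linear feedback, which is the assertion. I expect the high-frequency reorganization to be the main technical obstacle, precisely because it is where the unbounded graph-norm estimate of the perturbation fails and must be traded for the Hilbert-space square-root admissibility bounds.
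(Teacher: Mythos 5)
Your strategy is genuinely different from the paper's, and it has a gap exactly where you yourself locate it. The decisive step is the \emph{uniform} smallness, over the whole half-plane $\{\real\mu>-\omega\}$, of the perturbation $(G_\lambda-G)(\mu I-\mathcal G)^{-1}$ feeding the Neumann series, and this is not established. Yosida regularization gives $G_\lambda x\to Gx$ only \emph{strongly} on $X_1$, not in the norm of $\mathcal L(X_1,U)$, so $\B(G_\lambda-G)$ is never a norm-small relative perturbation. Concretely, writing $\A(\mu I-\mathcal G)^{-1}=\mu(\mu I-\mathcal G)^{-1}-I-\B G(\mu I-\mathcal G)^{-1}$, the admissibility bounds $\|G(\lambda I-\A)^{-1}\|=O(\lambda^{-1/2})$ and $\|G(\mu I-\mathcal G)^{-1}\|=O\bigl((\omega'-\omega)^{-1/2}\bigr)$ do control the bounded summands, but the remaining term contributes $O(\lambda^{-1/2}|\mu|)$, which no choice of $\lambda$ makes small uniformly in $\mathrm{Im}\,\mu$; and the mixed resolvent identity you propose to absorb the factor $\mu$ merely regenerates $G_\lambda(\mu I-\mathcal G)^{-1}-G(\mu I-\mathcal G)^{-1}$, i.e.\ the quantity you started from. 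So the ``high-frequency reorganization'' you defer is not a technicality to be filled in later; it is the whole content of the theorem, and there is no evident way to close it by resolvent perturbation of a strongly-but-not-uniformly convergent regularization.

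The paper's proof is entirely different and essentially a citation: it invokes Theorem 5.5 of Curtain--Logemann--Townley--Zwart, which for Pritchard--Salamon systems shows that exponential stabilizability by an admissible ($\A$-bounded) feedback is equivalent to exponential stabilizability by a bounded feedback; the argument there does not approximate the given unbounded feedback at all, but passes through the finite-cost condition and the LQ Riccati equation to \emph{construct} a bounded stabilizing feedback from the optimal cost operator. Complete stabilizability then follows by applying that equivalence rate by rate to the shifted generators $\A+\omega I$. If you want a self-contained proof, reproducing that Riccati/finite-cost argument is the viable route; salvaging your approach would require a norm (not merely strong) approximation of $G$ by bounded feedbacks in a topology controlling the closed-loop resolvent, which is not available in this generality.
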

\begin{proof}{}
 In \cite[Theorem 5.5]{Curtain_L_T_Z_1997} (see also \cite{Guo_Z_H_2003}), in a more general situation,
 it is shown that  system  (\ref{eq:absys}) with an admissible operator $\B$ is exponentially
stabilizable by admissible feedback (in $X_1$ and $X$) if and only if it is exponentially stabilizable by a bounded feedback. 
Hence, we can suppose without loss of generality, that in (\ref{perturb}) the operator $\F$ is bounded: $\F \in \mathcal L(X,U)$. 
This means that complete stabilizability by admissible feedbacks holds if and only if there is complete stabilizability 
by bounded feedbacks.  
\end{proof} 
From this and Theorem~\ref{th:Fbounded} we can expect to extend the result of Theorem~\ref{th:Fbounded} for the case of unbounded  control and feedback. But, 
Theorem~\ref{th:Fbounded} is based on \cite[Th. 3.3, 227]{Zabczyk_1992} which needs  \cite[Th. 4.3, 240]{Zabczyk_1992} based on the assumption 
of exact null controllability given by Definition~\ref{ex_contr}. For the case of unbounded control and feedback, 
we refer to \cite[Theorem 3.3, page 132]{Pritchard_Salamon_1987}. The condition H4 used in this theorem is guaranteed by exact null  controllability in $X_{-1}$ (each initial 
state from $X_{-1}$ may be moved to zero by an $L_2$ control). 
\begin{corollary}{}\label{cor:1}
If system (\ref{eq:absys})  with admissible operator $\B$  is exactly null controllable in $X_{-1}$, then it is completely stabilizable by  an admissible  feedback
and then by a bounded feedback $\F$. 
\end{corollary}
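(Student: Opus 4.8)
{}
The plan is to follow the same two-stage scheme as in the proof of Theorem~\ref{th:Fbounded}, replacing the bounded-control machinery of Zabczyk by the Pritchard--Salamon theory for admissible operators. The first stage is to pass from exact null controllability to plain exponential stabilizability. Exact null controllability in $X_{-1}$ means that every $x_0 \in X_{-1}$ can be steered to $0$ in finite time by an $L_2$ control; as in Theorem~\ref{th:Fbounded}, this yields for each $x_0$ a control $u(\cdot) \in L_2(0,\infty;U)$ making the cost $\int_0^\infty(\|x(t)\|^2 + \|u(t)\|^2)\dd t$ finite. I would use this to verify hypothesis H4 of \cite[Theorem 3.3, p.~132]{Pritchard_Salamon_1987}, whose conclusion is exponential stabilizability of (\ref{eq:absys}) by an admissible $\A$-bounded feedback.

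The second stage upgrades exponential stabilizability to an arbitrary decay rate by the shift argument already used in the bounded case. For $\omega > 0$, the substitution $x(t) = \e^{-\omega t}\tilde x(t)$, $u(t) = \e^{-\omega t}\tilde u(t)$ maps solutions of $\dot{\tilde x} = (\A + \omega I)\tilde x + \B \tilde u$ onto solutions of (\ref{eq:absys}); since on any finite interval $[0,T]$ the factor $\e^{\omega t}$ is bounded above and below, admissibility of $\B$ is preserved and exact null controllability in $X_{-1}$ of (\ref{eq:absys}) is equivalent to that of the shifted system $\dot x = (\A + \omega I) x + \B u$. Applying the first stage to the shifted generator produces an admissible feedback $\F_\omega$ and constants $M_{\mu_\omega}, \mu_\omega > 0$ with $\|\e^{(\A + \omega I + \B\F_\omega)t}\| \le M_{\mu_\omega}\e^{-\mu_\omega t}$, whence $\|\e^{(\A + \B\F_\omega)t}\| = \e^{-\omega t}\|\e^{(\A + \omega I + \B\F_\omega)t}\| \le M_{\mu_\omega}\e^{-(\mu_\omega + \omega)t} \le M_\omega \e^{-\omega t}$. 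As $\omega$ is arbitrary, this is complete stabilizability by admissible feedback.

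Finally, Theorem~\ref{th:Funbounded} converts complete stabilizability by admissible feedbacks into complete stabilizability by a bounded feedback $\F$, which is the assertion of the corollary. The main obstacle I anticipate is the first stage: one must check that exact null controllability in the extrapolation space $X_{-1}$ is precisely the right hypothesis to secure condition H4 of the Pritchard--Salamon theorem, and that the resulting feedback is admissible in the sense of the rigged triple $X_1 \subset X \subset X_{-1}$, so that the Cauchy formula (\ref{perturb}) and Theorem~\ref{th:Funbounded} apply. The shift argument and the concluding appeal to Theorem~\ref{th:Funbounded} are routine once this is in place.
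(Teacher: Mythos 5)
Your proposal is correct and follows essentially the same route the paper takes: the paper justifies the corollary in the paragraph immediately preceding it, by noting that exact null controllability in $X_{-1}$ guarantees condition H4 of \cite[Theorem 3.3, page 132]{Pritchard_Salamon_1987} (yielding exponential stabilizability by admissible feedback), then applying the shift argument of Theorem~\ref{th:Fbounded} and passing to a bounded feedback via Theorem~\ref{th:Funbounded}. Your explicit identification of the H4 verification as the delicate point is a fair reading of what the paper leaves implicit.
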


\subsection{A technical Lemma}
In the next section we need  the following  lemma.
\begin{lemma}{}\label{lemma}
 Let $A$ be a $(n\times n)$-matrix and $B$ a $(n\times m)$-matrix. The following statements are equivalent.
\begin{enumerate}
\item For all $\lambda \in \Cset, \ \lambda \ne 0$, $\mathrm{rank}\begin{pmatrix}
                    \lambda I -A & B
                   \end{pmatrix} = n$.
\item \label{inclu}The following equality holds
\begin{eqnarray*}
\mathrm{rank} \begin{pmatrix}
B & AB & \cdots & A^{n-1}B  \end{pmatrix} 
=
 \mathrm{rank} \begin{pmatrix}
B & AB & \cdots & A^{n-1}B & A^{n}
\end{pmatrix}, 
\end{eqnarray*} and this  is equivalent to the inclusion:
$$
\mathrm{Im\,}A^{n} \subset \mathrm{Im}\begin{pmatrix}
B & AB & \cdots & A^{n-1}B  \end{pmatrix}.
$$
\end{enumerate}
\end{lemma}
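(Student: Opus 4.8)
The plan is to reduce both statements to a single spectral property of the map that $A$ induces on the quotient of $\Cset^n$ by the reachable subspace. Set
$$
\R = \mathrm{Im}\begin{pmatrix} B & AB & \cdots & A^{n-1}B \end{pmatrix},
$$
the controllable subspace of the pair $(A,B)$. By the Cayley--Hamilton theorem $\R$ is $A$-invariant, and it obviously contains $\mathrm{Im}\, B$, so $A$ descends to a well-defined linear map $\bar A$ on the quotient space $\Cset^n/\R$. The first, purely bookkeeping, step is to rewrite statement~\ref{inclu}: since the rank of a matrix equals the dimension of its image, the displayed rank equality holds exactly when $\mathrm{Im}\, A^n \subseteq \R$; and because $A^n$ induces the map $\bar A^{\,n}$ on $\Cset^n/\R$, this inclusion is the same as $\bar A^{\,n}=0$. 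As $\bar A$ acts on a space of dimension at most $n$, $\bar A^{\,n}=0$ is equivalent to $\bar A$ being nilpotent, that is, to $\bar A$ having no nonzero eigenvalue.

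Next I would put statement 1 into the same spectral language. The rank of $\begin{pmatrix} \lambda I-A & B\end{pmatrix}$ drops below $n$ precisely when its rows are linearly dependent, i.e.\ when there is a nonzero row vector $w$ with $w(\lambda I-A)=0$ and $wB=0$, equivalently $wA=\lambda w$ and $wB=0$. From these two relations one gets $wA^kB=\lambda^k wB=0$ for every $k$, so $w$ annihilates $\R$ and therefore defines a nonzero functional on $\Cset^n/\R$ which is a left eigenvector of $\bar A$ for the eigenvalue $\lambda$. Conversely, any left eigenvector of $\bar A$ lifts to a vector $w$ annihilating $\R\supseteq\mathrm{Im}\, B$ with $wA=\lambda w$ and $wB=0$, hence to a rank drop at $\lambda$. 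Thus the values of $\lambda$ at which the Hautus rank is deficient are exactly the eigenvalues of $\bar A$, and statement 1 (full rank for every $\lambda\ne 0$) says precisely that $\bar A$ has no nonzero eigenvalue.

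Combining the two reductions, each of statement 1 and statement~\ref{inclu} is equivalent to the single condition that $\bar A$ is nilpotent, which proves the lemma; the final clause (rank equality $\Leftrightarrow \mathrm{Im}\, A^n \subseteq \R$) was already disposed of in the first step.

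I would expect the only delicate point to be the correspondence in the second paragraph: one must check carefully that the left null vector $w$ witnessing the rank deficiency really is orthogonal to the whole reachable subspace $\R$ and genuinely descends to an eigenvector of $\bar A$, and that in the converse direction the lift of an eigenvector of $\bar A$ respects the constraint $wB=0$. This identification of the ``uncontrollable modes'' of statement 1 with the spectrum of $A$ on $\Cset^n/\R$ is the conceptual heart of the argument, whereas the rank-versus-inclusion manipulations and the nilpotency reformulation are routine.
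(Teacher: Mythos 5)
Your proof is correct and is essentially the paper's argument in different clothing: where you pass to the quotient $\Cset^n/\R$ and the induced map $\bar A$, the paper works with the annihilator of $\R$, namely the $A^*$-invariant subspace $\mathcal N=\{x: B^*A^{*i}x=0,\ i\in\Nset\}$, and the restriction $A^*|_{\mathcal N}$, which is canonically the adjoint of your $\bar A$. In both versions the crux is identical --- the Hautus rank deficiency at $\lambda$ corresponds to a (left) eigenvector of the induced map, so condition 1 says that map has no nonzero eigenvalue, i.e.\ is nilpotent, and nilpotency on a space of dimension at most $n$ forces the $n$-th power to vanish, which is condition 2.
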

\begin{proof}{}
Conditions 1 and 2 of the lemma  may be form\-ulated as follows.
\begin{enumerate}
 \item[1.] If there is $x \ne 0$ such that $A^*x=\lambda x$ 
and $B^*x=0$, then $\lambda =0$.
\item[2.] If $x \ne 0$ is such that $B^* A^{* i}x =0$, $i \in \Nset$, then $A^{*n}x=0$.
\end{enumerate}

Suppose that 1 holds.   
Let $\mathcal N$ be the subspace 
$$
\mathcal{N}= \{x : B^*A^{* i}x=0, \ i \in \Nset \}.
$$  
 It is easy to see that $\mathcal{N}$ is
 $A^*$-invariant and contained in $\Ker B^*$. The spectrum of the restriction of $A^*$ to  $\mathcal{N}$  is $\{0\}$ by Condition 1. This means that
$A^*$ is nilpotent in $\mathcal{N}$. As the dimension of $\mathcal{N}$ is $k \le n$, we obtain $A^{*n}x = 0$ for all $x \in \mathcal{N}$. 
This gives 2. 

 Let us show the equivalence of these conditions. Suppose now that 2 holds. Let $x \ne 0$ be such that $A^*x=\lambda x$ 
and $B^*x=0$. This implies that $B^* A^{* i}x = \lambda^i B^* x=0$, for all $i \in \Nset$. From Condition 2, we obtain that
$$
0=A^{*n}x = \lambda^n x.
$$ 
As $x \ne 0$, this implies  $\lambda =0$. This gives that statement 1 is verified.
 \end{proof}
\section{The neutral type system: controllability and stabilizability}\label{neutral}
In this section we analyze exact  null control\-la\-bility and the complete stabilizability 
(exponential stabilizability with an arbitrary decay rate) of delay system of neutral 
type  (\ref{eq:1}) and investigate the relation between the two notions.
By duality, we give condi\-tions of the exact  final observability of such system with  outputs
$$
y(t)=Cz(t) \qquad \mbox{or}\qquad y(t)=Cz(t-1),
$$
where $y(t)$ takes values in $\Rset^p$.

The relation between exact controllability and expo\-nential stabilizability for linear neutral type systems may
be found in several papers 
(see,  for example \cite{Salamon_Pitman_1984,Ito_Tarn_1985,Oconnor_Tarn_1983,Dusser_Rabah_2001}
and references therein).

For the analysis of  stabilizability,  we need the struc\-ture of the spectrum of the state operator $\A$
of  system (\ref{eq:1}) and the condition of the growth of semigroup $\e^{\A t}$.

\begin{theorem}{}{\em \cite{Rabah_Sklyar_Rezounenko_2005}} \label{prop5}
Let $ \Delta_{\A}$ be the matrix:
\begin{eqnarray*}
\lefteqn{\Delta_{\A} (\lambda)=}\\
&& \lambda I
- \lambda e^{-\lambda}A_{-1} - 
\int^0_{-1}\left [\lambda e^{\lambda s} A_2(s)  +e^{\lambda
s} A_3(s)\right]{\mathrm d}s.
\end{eqnarray*}
The spectrum of $\A$, noted $\sigma(\A)$,
consists of eigenvalues only which are the roots of the
equation $\det \Delta_{\A} (\lambda)=0$. 
The corresponding eigenvectors of ${\mathcal A}$ are of the form
$$
\begin{pmatrix}
v-e^{-\lambda}A_{-1}v \cr e^{\lambda\theta} v
\end{pmatrix}, \quad v\in \Ker  \Delta_{\A} (\lambda).
$$ 
The spectrum of $\A$ contains a non empty set of point of the form 
$$
\{\ln |\mu|+ {\rm i}(\arg \mu +2\pi
k)+{O}(1/k), \ 
k \in \mathbb{Z}\},
$$
where $\mu$ is a non-zero eigenvalue of the matrix $A_{-1}$.
\end{theorem}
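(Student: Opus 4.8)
The plan is to reduce everything to the eigenvalue equation $\A x=\lambda x$ and to an explicit resolvent computation, after which the clustering of the spectrum follows from an asymptotic analysis of the entire function $\det\Delta_{\A}$.

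First I would compute the eigenvectors directly. Writing $x=(v,\varphi)\in D(\A)$ and imposing $\A x=\lambda x$ gives the two scalar equations $L\varphi=\lambda v$ and $\dot\varphi(\theta)=\lambda\varphi(\theta)$, together with the boundary relation $v=\varphi(0)-A_{-1}\varphi(-1)$. The second equation forces $\varphi(\theta)=\e^{\lambda\theta}\varphi(0)$; setting $w=\varphi(0)$, the boundary relation yields $v=(I-\e^{-\lambda}A_{-1})w$, and substituting $\varphi(\theta)=\e^{\lambda\theta}w$ into $L\varphi=\lambda v$ produces exactly $\Delta_{\A}(\lambda)w=0$. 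Hence $\lambda$ is an eigenvalue iff $\Ker\Delta_{\A}(\lambda)\neq\{0\}$, i.e.\ $\det\Delta_{\A}(\lambda)=0$, and the eigenvector is $\bigl((I-\e^{-\lambda}A_{-1})w,\ \e^{\lambda\theta}w\bigr)$, matching the asserted form once $w$ is renamed $v\in\Ker\Delta_{\A}(\lambda)$. This step is routine.

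Next, to show that $\sigma(\A)$ consists only of these eigenvalues, I would invert $\lambda I-\A$ explicitly. Solving $(\lambda I-\A)(v,\varphi)=(g,h)$ amounts to solving the ODE $\dot\varphi=\lambda\varphi-h$, whose solution $\varphi(\theta)=\e^{\lambda\theta}w-\int_0^\theta\e^{\lambda(\theta-s)}h(s)\,\dd s$ depends on the free vector $w=\varphi(0)$; the boundary relation then expresses $v$ through $w$ and $h$, and the first component equation $\lambda v-L\varphi=g$ collapses to a finite-dimensional linear system $\Delta_{\A}(\lambda)w=r(g,h)$, where $r$ depends boundedly on $(g,h)$. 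When $\det\Delta_{\A}(\lambda)\neq0$ this is uniquely solvable and the resulting $(v,\varphi)$ gives a bounded $(\lambda I-\A)^{-1}$, so $\lambda\in\rho(\A)$. Since $\det\Delta_{\A}$ is a nonzero entire function (indeed $\lambda^{-1}\Delta_{\A}(\lambda)\to I$ as $\real\lambda\to+\infty$ along the reals, so $\det\Delta_{\A}(\lambda)\sim\lambda^{n}$ there), its zeros are isolated of finite multiplicity, and $\sigma(\A)$ is exactly this discrete zero set, consisting of eigenvalues only.

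The last and hardest part is the asymptotic location of the zeros. I would factor $\Delta_{\A}(\lambda)=\lambda\bigl[(I-\e^{-\lambda}A_{-1})-\tfrac1\lambda M(\lambda)\bigr]$ with $M(\lambda)=\int_{-1}^0(\lambda\e^{\lambda s}A_2(s)+\e^{\lambda s}A_3(s))\,\dd s$. Along a vertical strip where $\real\lambda$ stays bounded and $\Im\lambda\to\infty$, the Riemann--Lebesgue lemma (using $A_2,A_3\in L_2$) gives $\lambda^{-1}M(\lambda)\to0$, so $\lambda^{-1}\Delta_{\A}(\lambda)\to I-\e^{-\lambda}A_{-1}$. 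The zeros of $\det(I-\e^{-\lambda}A_{-1})$ are precisely the $\lambda$ with $\e^{\lambda}=\mu\in\sigma(A_{-1})\setminus\{0\}$, namely $\lambda=\ln|\mu|+\ii(\arg\mu+2\pi k)$, $k\in\mathbb{Z}$. To upgrade this approximate description to genuine zeros of $\det\Delta_{\A}$ I would fix a nonzero $\mu\in\sigma(A_{-1})$ and apply Rouch\'e's theorem on small circles of radius $\sim1/k$ centred at those points: the perturbation $\tfrac1\lambda M(\lambda)$ is $o(1)$ there while $\det(I-\e^{-\lambda}A_{-1})$ is bounded away from zero on the boundary, so each circle traps a true root within $O(1/k)$ of $\ln|\mu|+\ii(\arg\mu+2\pi k)$. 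Since a genuinely neutral system has at least one nonzero eigenvalue $\mu$ of $A_{-1}$, this yields the asserted non-empty family. The main obstacle is exactly this Rouch\'e step: one must control $M(\lambda)$ uniformly on the relevant circles and verify that the leading factor stays bounded below on their boundaries, which is delicate when $\mu$ is a multiple eigenvalue of $A_{-1}$ or lies in a nontrivial Jordan block, so that the naive root count must be replaced by an argument tracking algebraic multiplicities.
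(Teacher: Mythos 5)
The paper offers no proof of this theorem --- it is quoted verbatim from Rabah, Sklyar and Rezounenko (2005) --- so there is no internal argument to compare yours against; I can only assess the proposal on its own terms. Your first two steps are correct and are the standard ones: the eigenvalue equation does reduce to $\Delta_{\A}(\lambda)w=0$ with eigenvector $\bigl((I-\e^{-\lambda}A_{-1})w,\ \e^{\lambda\theta}w\bigr)$, and the explicit resolvent computation shows that $\det\Delta_{\A}(\lambda)\neq 0$ implies $\lambda\in\rho(\A)$, so that $\sigma(\A)$ is exactly the discrete zero set of the nonzero entire function $\det\Delta_{\A}$ and consists of eigenvalues only.

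The gap is in the Rouch\'e step. On a circle of radius $r_k\sim 1/k$ centred at $\lambda_k=\ln|\mu|+\ii(\arg\mu+2\pi k)$, the comparison function $\det(I-\e^{-\lambda}A_{-1})=\e^{-n\lambda}\prod_j(\e^{\lambda}-\mu_j)$ is \emph{not} bounded away from zero uniformly in $k$: it has a zero of order $m$ (the algebraic multiplicity of $\mu$) at the centre, so on the boundary it is only of size comparable to $r_k^{m}=k^{-m}$. Rouch\'e therefore requires the perturbation of the determinant to be $o(k^{-m})$ on these circles. But the term $\int_{-1}^{0}\e^{\lambda s}A_2(s)\,\dd s$ is, for $\lambda$ near $\lambda_k$, essentially the $k$-th Fourier coefficient of the $L_2$ function $\e^{(\ln|\mu|+\ii\arg\mu)s}A_2(s)$; Riemann--Lebesgue gives only $o(1)$, and for general $A_2\in L_2$ these coefficients are square-summable but need not be $O(1/k)$. (Take $n=1$, $A_{-1}=1$, $A_3=0$ and $A_2$ with Fourier coefficients of size $k^{-0.6}$: the characteristic equation becomes $\e^{-\lambda}=1-\int_{-1}^0\e^{\lambda s}A_2(s)\,\dd s$ and the root near $2\pi\ii k$ sits at distance of order $k^{-0.6}$, not $k^{-1}$.) So your scheme, run correctly with circles whose radii shrink more slowly, yields $m$ roots within a distance $\varepsilon_k\to 0$ of $\lambda_k$ with $(\varepsilon_k)$ square-summable --- which is the form in which the localization is actually established and used for such $L_2$ coefficients --- but it does not deliver the $O(1/k)$ rate as you have written it, and that rate cannot follow from Riemann--Lebesgue alone. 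Your separate worry about Jordan blocks is, by contrast, harmless: only the algebraic multiplicity of $\mu$ enters the determinant, not the Jordan structure.
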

The spectrum is countable and the semigroup $\e^{\A t}$ verifies the spectrum growth assumption (see, for example, \cite{Hale_Verduyn_1993}):
$$
\forall \omega > \omega_0= \sup \real \sigma (\A), \quad \exists M_\omega : \ \Vert \e^{\A t} \Vert \le 
M_\omega \e^{\omega t}.
$$
 \begin{definition}{}
 System~(\ref{eq:1}) is exactly null controllable if for some $T>0$ and 
for all $x_0 \in M_2$, there is a control $u(\cdot) \in L_2(0,T;\Rset^m)$ such that
$$
\e^{\A t}x_0 + \int_0^T  \e^{\A (T-\tau)} \B u(\tau)\dd \tau =0,
$$
this corresponds to the concept of complete controllability given first by N. N. Krasovski\v{\i} for retarded systems.
\end{definition}
Let $\R_T$ be the linear  operator defined by
$$
\R_T u(\cdot) = \int_0^T  \e^{\A (T-\tau)} \B u(\tau)\dd \tau,  u(\cdot) \in L_2(0,T;\Rset^m). 
$$
The operator $\R_T$  is bounded from $L_2(0,T;\Rset^m)$ to $X$. Moreover it takes values in
$D(\A)$ and is bounded from $ L_2(0,T;\Rset^m)$ to $X_1$ (see \cite[Corollary 2.7]{Ito_Tarn_1985} and 
\cite{Rabah_Sklyar_2007} for our system).

The exact null controllability may be formulated by the inclusion
$$
\mathrm{Im\,} \e^{\A T} \subset \mathrm{Im\,} \R_T,
$$
where $\mathrm{Im\,} \e^{\A T}$ and  $\mathrm{Im\,} \R_T$ are 
images of the operators  $\e^{\A T}$ and $\R_T$.
From the well-known characterization of range inclusion in Hilbert spaces \cite{Douglas_1966}
we can obtain the following prop\-osition, which is an extension of Theorem~\ref{th:excont}.
\begin{proposition}{}
System~(\ref{eq:1}) is exactly null controllable for some $T>0$ if and only if there is a  constant
$\delta >0$  such that 
$$
 \int_0^T \left \Vert \B^* \e^{\A^* (T-\tau)} x \right \Vert_{\Rset^m}^2 \dd \tau\ge \delta^2\left  \Vert \e^{\A^* T} x \right  \Vert_{M_2}^2,
$$
for all $x \in M_2$. 
\end{proposition}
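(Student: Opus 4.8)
The plan is to apply the Douglas range-inclusion theorem, exactly as the paper signals in the sentence immediately preceding the proposition. I should recognize that the proposition is essentially Theorem~\ref{th:excont} specialized to system~(\ref{eq:1}), with the abstract operator $\B$ replaced by the concrete control operator and with the finite-time reachability operator $\R_T$ playing the role of the controllability map. So the whole statement is a translation of the operator-theoretic criterion into the integral inequality.

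Let me think about the steps...

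The key observation is that exact null controllability has already been reformulated as the range inclusion $\mathrm{Im\,}\e^{\A T}\subset\mathrm{Im\,}\R_T$. The Douglas theorem says that for bounded operators $R, S$ on Hilbert spaces, $\mathrm{Im\,}S\subset\mathrm{Im\,}R$ if and only if there is a constant $\delta>0$ with $\|S^*x\|\le\frac{1}{\delta}\|R^*x\|$ for all $x$ (equivalently $\|R^*x\|^2\ge\delta^2\|S^*x\|^2$). So I'd set $R=\R_T$ and $S=\e^{\A T}$, note both are bounded on $M_2$ (the boundedness of $\R_T$ is asserted in the text), compute the adjoints, and read off the inequality.

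Let me verify the adjoint computation produces the right integral...
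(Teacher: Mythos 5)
Your plan is exactly the paper's own argument: the paper states the range inclusion $\mathrm{Im\,}\e^{\A T}\subset\mathrm{Im\,}\R_T$ as the reformulation of exact null controllability and then invokes the Douglas range-inclusion theorem, with the adjoint $(\R_T^*x)(\tau)=\B^*\e^{\A^*(T-\tau)}x$ yielding the integral on the left and $(\e^{\A T})^*=\e^{\A^*T}$ the right-hand side. The proposal is correct and takes essentially the same route; just make sure to actually carry out the adjoint computation you defer at the end.
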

We can now give the main result of this Section.
\begin{theorem}{}\label{th:main}
 If the system~(\ref{eq:1}) is exactly null controllable, then the following two  conditions hold
\begin{enumerate}
\item $\mathrm{rank}\begin{pmatrix}
                              \Delta_\A(\lambda) & B
                             \end{pmatrix} = n $ for all $\lambda \in \Cset$,
 \item $\mathrm{rank}\begin{pmatrix}
                              \mu I - A_{-1} & B
                             \end{pmatrix} = n $ for all $\mu \in \Cset$, $\mu \ne 0$.
\end{enumerate}
\end{theorem}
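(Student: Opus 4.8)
The plan is to exploit the controllability inequality of the preceding Proposition by testing it against eigenvectors of $\A^*$. A direct adjoint computation (dual to that of Theorem~\ref{prop5}) shows that whenever $\lambda$ is a root of $\det\Delta_\A=0$ its conjugate $\bar\lambda$ is an eigenvalue of $\A^*$, with an eigenvector $\psi$ whose first ($\Rset^n$) component $g$ lies in $\Ker\Delta_\A(\lambda)^*$. Since $\B^*(g,\varphi)=B^*g$, for such an eigenvector one has $e^{\A^*t}\psi=e^{\bar\lambda t}\psi$ and $\B^*e^{\A^*(T-\tau)}\psi=e^{\bar\lambda(T-\tau)}B^*g$. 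I also note that condition~1 is automatic unless $\lambda$ is a root of $\det\Delta_\A=0$, since otherwise $\Delta_\A(\lambda)$ already has rank $n$; only the roots need be examined.

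For condition~1 I argue by contradiction at a fixed root $\lambda_0$. If the rank drops there, there is $g\ne0$ with $\Delta_\A(\lambda_0)^*g=0$ and $B^*g=0$. Taking the eigenvector $\psi$ of $\A^*$ for $\bar\lambda_0$ with first component $g$, the left-hand side of the inequality becomes $\int_0^T|e^{\bar\lambda_0(T-\tau)}|^2\|B^*g\|^2\,\dd\tau=0$, while the right-hand side equals $\delta^2|e^{\bar\lambda_0 T}|^2\|\psi\|^2>0$. This contradiction establishes condition~1 at every root, hence for every $\lambda\in\Cset$.

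For condition~2 I would use the asymptotic part of $\sigma(\A)$ described in Theorem~\ref{prop5}: to each nonzero eigenvalue $\mu$ of $A_{-1}$ there corresponds a chain of roots $\lambda_k=\ln|\mu|+\ii(\arg\mu+2\pi k)+O(1/k)$ with bounded real parts and $e^{-\lambda_k}\to\mu^{-1}$. Dividing $\Delta_\A(\lambda_k)$ by $\lambda_k$ and using $\int_{-1}^0 e^{\lambda_k s}A_j(s)\,\dd s\to0$ (Riemann--Lebesgue) yields $\lambda_k^{-1}\Delta_\A(\lambda_k)\to I-\mu^{-1}A_{-1}$, so the first components $g_k$ of the corresponding $\A^*$-eigenvectors accumulate on $\Ker(\bar\mu I-A_{-1}^*)$. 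If condition~2 failed at $\mu$, there would be $w\ne0$ with $A_{-1}^*w=\bar\mu w$ and $B^*w=0$; choosing the chain so that $g_k\to w$, the inequality tested on these eigenvectors and passed to the limit would force $B^*w\ne0$, a contradiction.

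The main obstacle is condition~2: one must make rigorous the reduction of the infinite neutral chain to the finite-dimensional Popov--Belevitch--Hautus test for the pair $(A_{-1},B)$. This requires controlling the $L_2$-component of the high-frequency eigenvectors $\psi_k$ and the dependence of both sides of the inequality on $\lambda_k$ uniformly in $k$, so that the surviving limit is exactly the rank condition at $\mu$. Condition~1, by contrast, is routine once the eigenvectors of $\A^*$ have been identified.
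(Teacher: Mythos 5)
Your argument for Condition~1 is fine and is essentially what the paper means when it calls that part ``trivial'': testing the null-controllability inequality on an eigenvector of $\A^*$ associated with $\bar\lambda_0$ makes the left-hand side vanish while the right-hand side stays positive. The genuine gap is exactly where you suspect it: Condition~2. The step ``choosing the chain so that $g_k\to w$'' is not available. What your limiting argument gives is that every accumulation point of the normalized kernel vectors $g_k\in\Ker\Delta_\A(\lambda_k)^*$ lies in $\Ker(\bar\mu I-A_{-1}^*)$; it does not give the converse, namely that an \emph{arbitrary prescribed} $w$ in that eigenspace (in particular the one with $B^*w=0$) is attained as such a limit. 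For large $k$ the kernels $\Ker\Delta_\A(\lambda_k)^*$ are generically one-dimensional even when $\Ker(\bar\mu I-A_{-1}^*)$ is not, so the limit set can be a proper subset of the unit sphere of the eigenspace and may simply miss the direction $w$ on which the Hautus test fails. In addition, to extract $\Vert B^*g_k\Vert^2\ge c\,\Vert g_k\Vert^2$ from the inequality you still need a uniform lower bound on $\Vert\psi_k\Vert_{M_2}$ in terms of $\Vert g_k\Vert$ for the high-frequency eigenvectors of $\A^*$, which you have not supplied. So Condition~2 is not proved.

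The paper avoids the spectral limit entirely and proves Condition~2 by a time-domain argument (following Metel'skii--Minyuk): iterating the equation $\dot z(t)=A_{-1}\dot z(t-1)+Lz_t+Bu(t)$ $N$ times expresses $\dot z(t)$ through $A_{-1}^N\dot z(t-N)$ plus terms $A_{-1}^k\bigl(Lz_{t-k}+Bu(t-k)\bigr)$; since $z\equiv 0$ after time $T$ while the initial derivative $\dot\varphi(-0)$ can be chosen arbitrarily, matching the jump discontinuities of $\dot z$ at integer times forces $\mathrm{Im}\,A_{-1}^N\subset\mathrm{Im}\begin{pmatrix}B & A_{-1}B & \cdots & A_{-1}^{N-1}B\end{pmatrix}$, and then Cayley--Hamilton together with Lemma~\ref{lemma} converts this range inclusion into the rank condition $\mathrm{rank}\begin{pmatrix}\mu I-A_{-1} & B\end{pmatrix}=n$ for $\mu\ne 0$. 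If you want to keep a frequency-domain proof you would have to close the gap above, e.g.\ by proving that the union of the limit directions of the $g_k$ over all chains associated with $\mu$ spans $\Ker(\bar\mu I-A_{-1}^*)$; as it stands, the elementary iteration argument is both shorter and complete.
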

\begin{proof}{}
 Suppose that system~(\ref{eq:1}) is exactly null control\-lable. The necessity of condition~1 is trivial.
Let us show that condition~2 is verified. We follow a method used in \cite{Metelskiy_Minyuk_2006} (see also \cite{Khartovskii_Pavlovskaya_2013}).
Then, for some $T$, for all initial conditions, in particular for all  $\varphi \in H^1(-1,0; \Rset^n)$,
there is a control $u(\cdot) \in L_2(0,T;\Rset)$, $u(t)=0$ for $t > T$, such that $z(t)=0, \ t > T$.   We can suppose that $T > n$.

The function $z(t)$ is absolutely continuous and then almost everywhere differentiable. Then  we have 
$$
\dot z(t)= A_{-1}\dot z(t-1) + Lz_t + Bu(t).
$$
Replacing $\dot z(t-1)$ in this equation, we obtain
$$
\dot z(t)= A_{-1}(A_{-1} \dot z(t-2) +  Lz_{t-1}+ Bu(t-1) )+ Bu(t) .
$$
Without loss of generality one can suppose that the time $t$ is such that the function $u$ is well defined at these points. Repeating this procedure, we obtain
\begin{eqnarray*}
\lefteqn{\dot z(t)=A_{-1}^N\dot z(t-N)+ }\\&&\sum_{k=0}^{N-1}A_{-1}^{k}\left ( L z_{t-k} +
Bu(t-k)\right).
\end{eqnarray*}
Putting $t=N\ge T$, and using the continuity of $z(t)$ we obtain\\[1ex]
$0= A_{-1}^N\left (\dot z(+0) -\dot z(-0)\right) +$ 

\vskip -3ex

\begin{equation}\label{eq:3} 
\sum_{k=0}^{N-1}A_{-1}^{k}\left (  
Bu(N-k + 0)- Bu(N-k - 0)\right).
\end{equation}
As $z(t)$ for $t>0$ is the solution of equation (\ref{eq:1}) we have
$$
\dot z(+0) =A_{-1}\dot {z}(-1)+L z_{+0}(\cdot) +Bu(+0).
$$
Then, replacing this expression in (\ref{eq:3}), and putting the initial condition $z_0(\theta) = \varphi(\theta)$, we obtain 
\begin{eqnarray*}
\lefteqn{
A_{-1}^N\left (A_{-1}\dot\varphi(-1)+L \varphi(\theta)  -\dot \varphi(-0)\right) + A_{-1}^NBu(+0) +}\\&&
\hskip -1 ex \sum\limits_{k=0}^{N-1}A_{-1}^{k}\left (  
Bu(N-k + 0)- Bu(N-k - 0)\right)=0.
\end{eqnarray*}
As $ \dot \varphi(-0) \in \Rset^n$ may be chosen arbitrarily, we  obtain
$$
\mathrm{Im\,} A_{-1}^N \subset  \mathrm{Im}\begin{pmatrix}
B & A_{-1}B & \cdots & A_{-1}^{N-1}B  \end{pmatrix}.
$$
This may be written as
\begin{eqnarray*}
\lefteqn{\mathrm{rank} \begin{pmatrix}
B & A_{-1}B & \cdots & A_{-1}^{N-1} B  \end{pmatrix}=}\\&&
\mathrm{rank}
 \begin{pmatrix} B & A_{-1}B & \cdots & A_{-1}^{N-1}B & A_{-1}^N  \end{pmatrix}.
\end{eqnarray*}
By the Cayley-Hamilton theorem, this gives
 \begin{eqnarray*}
\lefteqn{\mathrm{rank} \begin{pmatrix}
B & A_{-1}B & \cdots & A_{-1}^{n-1} B  \end{pmatrix}=}\\&&
\mathrm{rank}
 \begin{pmatrix} B & A_{-1}B & \cdots & A_{-1}^{n-1}B & A_{-1}^n  \end{pmatrix}.
\end{eqnarray*}
Now, using Lemma~\ref{lemma}, we obtain Condition 2.
\end{proof}
The necessary conditions of exact null controllability characterize in fact the complete stabilizability property. 

\begin{theorem}{}\label{th:com_tsab}
  System~(\ref{eq:1}) is  completely stabilizable by a feedback law of the form
\begin{equation}\label{fbl}
 u(t)=F_{-1}\dot z(t-1) + Fz_t(\cdot),
\end{equation}
where 
$$
Fz_t(\cdot)= \int^0_{-1}\left [F_2(\theta)\dot z(t+\theta) + F_3(\theta) z(t+\theta)\right ]{\mathrm d}\theta
$$
if and only if 
\begin{enumerate}
\item $\mathrm{rank}\begin{pmatrix}
                              \Delta_\A(\lambda) & B
                             \end{pmatrix} = n $ for all $\lambda \in \Cset$,
 \item $\mathrm{rank}\begin{pmatrix}
                              \mu I - A_{-1} & B
                             \end{pmatrix} = n $ for all $\mu \in \Cset$, $\mu \ne 0$.
\end{enumerate}
\end{theorem}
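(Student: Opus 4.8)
The plan is to prove the two implications separately, treating necessity of the rank conditions by a feedback-invariance argument and sufficiency by a two-stage construction. I will use repeatedly that a feedback of the form~(\ref{fbl}) turns the closed-loop generator into one of the same neutral type, with $A_{-1}$ replaced by $A_{-1}+BF_{-1}$ and $L$ by $L+BF$, so that its characteristic matrix is $\Delta_{\A+\B\F}(\lambda)=\Delta_\A(\lambda)-B\widehat F(\lambda)$ with $\widehat F(\lambda)=\lambda\e^{-\lambda}F_{-1}+\int_{-1}^0\bigl[\lambda\e^{\lambda s}F_2(s)+\e^{\lambda s}F_3(s)\bigr]\dd s$. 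The block column operation $\begin{pmatrix}\Delta_\A(\lambda)&B\end{pmatrix}\mapsto\begin{pmatrix}\Delta_\A(\lambda)-B\widehat F(\lambda)&B\end{pmatrix}$ is unimodular, so both rank conditions are invariant under~(\ref{fbl}); this single observation drives both directions.

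For necessity, suppose condition~1 fails at some $\lambda_0$: there is $w\neq0$ with $w^*\Delta_\A(\lambda_0)=0$ and $w^*B=0$. Then $w^*\Delta_{\A+\B\F}(\lambda_0)=0$ for every feedback~(\ref{fbl}), so $\lambda_0\in\sigma(\A+\B\F)$ and $\sup\real\sigma(\A+\B\F)\ge\real\lambda_0$, which caps the achievable decay and contradicts arbitrary decay. If instead condition~2 fails at some $\mu_0\neq0$, the same left annihilator gives $w^*\bigl(\mu_0 I-(A_{-1}+BF_{-1})\bigr)=0$, so $\mu_0$ stays a nonzero eigenvalue of the neutral coefficient for every $F_{-1}$; by Theorem~\ref{prop5} the closed-loop spectrum then carries a chain accumulating on the line $\real\lambda=\ln|\mu_0|$, whence $\sup\real\sigma(\A+\B\F)\ge\ln|\mu_0|$, a fixed finite abscissa that again caps the decay. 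Thus complete stabilizability forces both conditions.

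For sufficiency I would build the feedback in two stages. Condition~2 means the uncontrollable modes of the pair $(A_{-1},B)$ can sit only at $\mu=0$, so the uncontrollable part of $A_{-1}$ is nilpotent; finite-dimensional pole placement then lets me pick $F_{-1}$ sending every controllable eigenvalue to $0$, so that $A_{-1}+BF_{-1}$ is nilpotent. The associated difference operator is then stable and, by the structure in Theorem~\ref{prop5}, no vertical chain survives: the transformed generator is of retarded type, with only finitely many eigenvalues in each half-plane $\{\real\lambda\ge-\omega\}$. Since condition~1 is preserved by the first stage, each such eigenvalue is controllable in the Hautus sense, and I would use the functional gain $F$, which modifies $L\mapsto L+BF$ and hence $\Delta$, to reassign these finitely many eigenvalues to the left of $-\omega$. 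Together with the spectrum-growth property recorded after Theorem~\ref{prop5}, valid for the closed-loop generator of the same class, $\sup\real\sigma(\A+\B\F)<-\omega$ gives $\|\e^{(\A+\B\F)t}\|\le M_\omega\e^{-\omega t}$; as $\omega>0$ is arbitrary, this is complete stabilizability.

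The decisive obstacle is the second stage: achieving a prescribed reassignment of the finitely many eigenvalues by an admissible feedback of the form~(\ref{fbl}) \emph{without} creating new spectrum to the right of $-\omega$, and then ensuring that the spectral abscissa truly governs the decay rate rather than merely bounding it through the growth inequality. I expect the latter to hinge on the Riesz-basis property of the generalized eigenspaces from~\cite{Rabah_Sklyar_Rezounenko_2005}, which sharpens the spectrum-growth estimate to the control of the semigroup bound that is needed here; the assignment must be arranged so that the perturbation $\Delta_\A\mapsto\Delta_\A-B\widehat F$ stays small away from the targeted modes, and this is where most of the technical effort will be concentrated.
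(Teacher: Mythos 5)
Your proposal follows essentially the same route as the paper: necessity via the feedback-invariance of the two rank conditions (an unmovable eigenvalue of $\A$, respectively an unmovable vertical chain attached to a nonzero eigenvalue of $A_{-1}$, caps the decay rate), and sufficiency via the same two-stage construction, first choosing $F_{-1}$ so that the difference-operator spectrum no longer produces chains to the right of $-\omega$ (the paper takes $\ln|\mu|<-\omega$ rather than your nilpotent normalization, an inessential variant), then using Condition~1 to reassign the finitely many remaining eigenvalues and invoking the spectrum-determined growth property. The ``decisive obstacle'' you flag in the second stage is exactly the step the paper also does not prove but delegates to the cited stabilizability literature (Pandolfi; Pritchard--Salamon; Rabah--Sklyar--Rezounenko; Rabah--Sklyar--Barkhayev), so your account matches the paper's proof in substance and in its level of completeness.
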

\begin{proof}{}
We give a short and direct proof of the ne\-ces\-sity even if it may be obtained from Corollary 5.1.3 of \cite{Salamon_Pitman_1984}.

If Condition 1 is not verified, then there is an eigen\-value $\lambda_0$ of the operator $\A$ which cannot be modified by the control operator $\B$. 
This implies the lack of complete stabilizability. 

If Condition 2 is not verified, then there is a non\-zero eigenvalue $\mu_0$ of the matrix $A_{-1}$ which cannot be modified. Then the spectral set
$$
\{\ln |\mu_0|+ {\rm i}(\arg \mu_0 +2\pi
k)+{O}(1/k), \ 
k \in \mathbb{Z}\}\subset \sigma({\mathcal A}),
$$
 which belongs to a vertical strip, cannot also be modified. This means that complete stabilizability is not possible.

Let us show now that the two conditions are sufficient for complete stabilizability   feedback laws of the form (\ref{fbl}).

Suppose that Condition 2 is satisfied. Let us fix an arbitrary $\omega>0$. As all the non-zero poles of the matrix $A_{-1}$ are controllable 
by Condition 2,  then a matrix $F_{-1}$ can be found such that the spectrum $\sigma(A_{-1}+BF_{-1})$ verifies
$$
\forall \mu \in \sigma(A_{-1}+BF_{-1}),\ \mu \ne 0, \quad \ln|\mu| < -\omega.
$$
Consider now the neutral type system
\begin{equation}\label{sysFo}
 \dot z(t) =(A_{-1}+BF_{-1})\dot z(t-1) +Lz_t +Bu.
\end{equation}
Let $\A_1$ be the generator of system (\ref{sysFo}). From the structure of the spectrum of neutral type systems like
(\ref{eq:1}), we have only a finite number of eigenvalues $\lambda \in \sigma(\A_1)$ such that
$\real \lambda \ge -\omega$.
Now, using Condition 1 of the theorem,  a feedback  $u(t)= F_1z_t(\cdot)$, where 
\begin{eqnarray*}
 F_1z_t(\cdot)= \int^0_{-1}\left [F_2(\theta)\dot z(t+\theta) + F_3(\theta) z(t+\theta)\right ]{\mathrm d}\theta,
\end{eqnarray*}
can be found (see for example \cite{Pandolfi_1976,Pritchard_Salamon_1987,Rabah_Sklyar_Rezounenko_2008,Rabah_Sklyar_Barkhayev_2012}) 
such that all the eigenvalues $\lambda$ of the system
$$
\dot z(t) =(A_{-1}+BF_{-1})\dot z(t-1) +(L+BF_1)z_t 
$$
verify $\real \lambda < -\omega$. If we denote by $\F$ the global feedback 
$$
u(t)= F_{-1} \dot z(t-1) + F_1z_t(\cdot)),
$$
then we obtain
$$
\left \Vert \e^{(\A+\B\F)t} \right \Vert \le M \e^{-\omega t}, \quad M \ge 1.
$$
Since $\omega$ has been arbitrarily taken, this means that the system is completely stabilizable by a
feedback of  the form (\ref{fbl}).
\end{proof}
A similar result was obtained for modal controllability (assignment of characteristic quasi-polynomial) for neut\-ral sys\-tem 
with multiple discrete delays in \cite{Metelskiy_Khartovskii_2016}.

In view of Corollary~\ref{cor:1}, Theorem~\ref{th:main} and  Theorem~\ref{th:com_tsab}, one can formulate 
the following natural conjecture.

\noindent \textbf{Conjecture.} {\em System~(\ref{eq:1}) is exactly null controllable if the following two  conditions hold
\begin{enumerate}
\item $\mathrm{rank}\begin{pmatrix}
                              \Delta_\A(\lambda) & B
                             \end{pmatrix} = n $ for all $\lambda \in \Cset$,
 \item $\mathrm{rank}\begin{pmatrix}
                              \mu I - A_{-1} & B
                             \end{pmatrix} = n $ for all $\mu \in \Cset$, $\mu \ne 0$.
\end{enumerate}
This means that exact null controllability is equivalent to complete stabilizability for neutral type systems.}

It is well known that the Conjecture is verified for some class of neutral type systems with discrete del\-ays \cite{Metelskiy_Minyuk_2006,Khartovskii_Pavlovskaya_2013}
and in the case of retarded systems \cite{Olbrot_Pandolfi_1988}.
It seems to us, that one can use the conditions of  complete stabilizability to show the result of the conjecture. But at this moment, 
we have not a satisfactory formal proof. 
\section{Final exact observability}\label{observ}
The dual notion of exact null controllability in Hilbert space is the notion of final continuous 
observability. Sometimes the term continuous is replaced (by analogy) by the term  exact. In \cite{Rabah_Sklyar_2016},
the duality between exact controllability and exact observability was analyzed. In the 
present section we give the result for null exact controllability and the corresponding notion of observability.

 We consider the finite dimensional observation
\begin{equation}\label{output1}
 y(t)= \C x(t), 
\end{equation}
where $\C$ is a linear operator and $y(t) \in \Rset^p$ is a finite dimensional output. There are several
ways to design the output operator $\C$ \cite{Salamon_Springer_1983,Salamon_Pitman_1984,Metelskiy_Minyuk_2006}.
One of our goals in this paper is to investigate how to design
a minimal output operator like
\begin{equation}\label{output2}
 \C x(t)= Cz(t) \qquad \mbox{or} \qquad  \C x(t)= Cz(t-1),
\end{equation}
where $C$ is a $p\times n$ matrix. More general outputs, for example with several and/or distributed delays are not
considered here. We want to use some results on exact controllability in order to analyze, by duality,
the exact observability property in the  infinite dimensional setting like, for example, in
\cite{Tucsnak_Weiss_2009}.

The operator $\C$ defined in (\ref{output2}) is linear but not boun\-ded in $M_2$.
However, in both cases it is admissible in the following sense:
$$
\int_0^T\left \Vert\C \e^{\A t}x_0\right \Vert_{\Rset^p}^2\dd t \le \kappa^2 \|x_0\|_{M_2}^2, \qquad \forall
x_0 \in D({\mathcal A}),
$$    
because $\e^{\A t}x_0 \in D({\mathcal A}), \ t \ge 0$ (see for example \cite{Pazy_1983}).
\begin{definition}{}\label{def1}
Let  $\K$ be the output operator
$$
\K: M_2 \longrightarrow L_2(0,T;\Rset^p), \quad x_0 \longmapsto \K x_0=\C  \e^{\A t}x_0.
$$
System  (\ref{eq:1})  is said to be  exactly finally observable or contin\-uously finally observable \cite{Salamon_Pitman_1984}  if
\begin{eqnarray}\label{obsexact}
\Vert \K x_0 \Vert_{L_2}^2=
\int_0^T\left\Vert \C  \e^{\A t}x_0 \right \Vert_{\Rset^p}^2\dd t
\ge
 \gamma^2 \left\Vert  \e^{\A T}x_0\right\Vert_{M_2}^2, \hskip -5pt
\end{eqnarray}
for some constant $\gamma > 0$, and for all $x_0 \in D(\A)$. We say that the system is exactly (or continuously) observable if in (\ref{def1}) in  the 
second term of the inequality $\e^{\A T}x_0$ is replaced by $x_0$.
\end{definition}
Exact observability means that one continuously deter\-minates the initial state $z_0(\cdot)$ from the observation on $[0,T]$, final exact observability
that we can continuously determinate the final state $z_T(\cdot)$.
\medskip

The exact (final) observability depends essentially on the topology of the space.
We can expect that, the given neutral type system is not exactly observable
if we consider $x_0 \in D(\A)$,  with the norm of the graph and no longer in the topology of $M_2$.
In fact,  we obtain the final observability in the initial norm but we need some 
delay in the observation in the general case.

In order to use the duality between observability and controllability, we need the expression of the 
adjoint operator $\K^*$ in the duality  with respect to the pivot space $M_2$ in the embedding
$$
X_1 \subset X=M_2 \subset X_{-1},
$$
where $X_1= D(\A)$ with the graph norm noted $\|x\|_1$ and
$X_{-1}$ the completion of the space $M_2$ with respect to the resolvent norm
$\|x\|_{-1}=\left \|(\lambda I- \A)^{-1}x\right\|_{M_2}$.
The duality relation is
$$
\left \langle \K x_0, u(\cdot)\right \rangle_{L_2(0,T;\Rset^p)}
= \left \langle  x_0,\K^* u(\cdot)\right \rangle_{X_{1},X_{-1}^\dd},
$$
where $X_{-1}^\dd$ is constructed as $X_{-1}$ with $\A^*$ instead of $\A$
(see  \cite{Tucsnak_Weiss_2009} for example): $X_{-1}^\dd$ is the completion of the space  $M_2$ with the resolvent norm corresponding to 
the operator ${\mathcal A}^*$.

Exact null controllability is dual  with exact final observability  
in the corresponding spaces and with the corresponding topologies.
It is expected that the operator $\K^*$ corresponds to a control operator for some adjoint system. 
However, the situation is not so simple, as it was pointed out  in the paper \cite{Rabah_Sklyar_2016}, from which we
take our main considerations on duality.
 \begin{proposition}{}{\em \cite{Rabah_Sklyar_Rezounenko_2008,Rabah_Sklyar_2016}} \label{prop1}
The adjoint operator  ${\mathcal A}^{*}$ is
given by
$$
{\mathcal A}^{*}
\begin{pmatrix}
w\cr \psi (\cdot)
\end{pmatrix}
= \begin{pmatrix}
( A_2^{*}(0)w + \psi (0)
\cr -\frac{\dd [ \psi(\theta)+A_2^{*}(\theta)w ]}
{\dd \theta}
 +A_3^{*}(\theta)w
\end{pmatrix},
$$
with the domain $D({\A}^{*})$ consisting of  $(w , \psi(\cdot)) \in M_2$ such that:
$$
  \left\{ 
\begin{array}{l}
\psi(\theta)+A_2^{*}(\theta)w \in H^1,\cr
A_{-1}^{*} \left ( A_2^{*}(0)w + \psi(0)\right)
 =\psi(-1)+A_2^{*}(-1)w.
\end{array}
\right.
$$
Let $x$ be a solution of  the abstract equation
\begin{equation}\label{eq:2.0*}
 \dot x=\A^*x, \quad x(t)=
\begin{pmatrix}
w(t)
\cr \psi_t(\theta)
\end{pmatrix}.
\end{equation}
Then the function $w(t)$ is the solution of the neutral type equation\\[2ex]
$
\dot w(t+1) = A_{-1}^*\dot w(t) + 
$
 \begin{eqnarray}\label{eq:adj0}
  \int_{-1}^0 \left [A_2^*(\tau)\dot w(t+1+\tau) +
 A_3^*(\tau) w(t+1+\tau)\right ]\dd \tau. 
\end{eqnarray}
\end{proposition}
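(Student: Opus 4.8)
The plan is to establish the two assertions separately: first compute $\A^{*}$ as the Hilbert adjoint in the pivot space $M_2$ by a duality pairing, and then extract the neutral-type equation for $w$ by integrating the resulting transport equation along characteristics.

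For the adjoint formula, I would fix $x=(v,\varphi)\in D(\A)$ and an arbitrary $y=(w,\psi)\in M_2$, and expand $\langle \A x, y\rangle_{M_2} = \langle Lz,w\rangle + \int_{-1}^0 \langle \dot\varphi(\theta),\psi(\theta)\rangle\,\dd\theta$. Using $\langle A_2(\theta)\dot\varphi,w\rangle = \langle \dot\varphi, A_2^{*}(\theta)w\rangle$ and the analogous identity for $A_3$, the terms carrying $\dot\varphi$ collect into $\int_{-1}^0\langle \dot\varphi(\theta), A_2^{*}(\theta)w+\psi(\theta)\rangle\,\dd\theta$. I would then integrate this by parts, which forces the requirement $\eta(\theta):=\psi(\theta)+A_2^{*}(\theta)w \in H^1$, precisely the first domain condition for $\A^{*}$. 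The boundary terms are $\langle \varphi(0),\eta(0)\rangle - \langle \varphi(-1),\eta(-1)\rangle$; using the constraint $\varphi(0)=v+A_{-1}\varphi(-1)$ from $D(\A)$, the first splits as $\langle v,\eta(0)\rangle + \langle\varphi(-1),A_{-1}^{*}\eta(0)\rangle$. Matching against $\langle x,\A^{*} y\rangle = \langle v, w^{*}\rangle + \int_{-1}^0\langle\varphi,\psi^{*}\rangle\,\dd\theta$, where $\A^{*}y=(w^{*},\psi^{*})$, then yields the first component $w^{*}=\eta(0)=A_2^{*}(0)w+\psi(0)$, the second component $\psi^{*}=-\dot\eta(\theta)+A_3^{*}(\theta)w$, and the condition that the leftover boundary term $\langle\varphi(-1), A_{-1}^{*}\eta(0)-\eta(-1)\rangle$ vanish for all $\varphi(-1)\in\Rset^n$, i.e. $A_{-1}^{*}(A_2^{*}(0)w+\psi(0))=\psi(-1)+A_2^{*}(-1)w$, which is the second domain condition.

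For the neutral-type equation I would read off from $\dot x = \A^{*} x$ the pair $\dot w(t) = A_2^{*}(0)w(t)+\psi_t(0)$ and $\partial_t\psi_t(\theta) = -\partial_\theta[\psi_t(\theta)+A_2^{*}(\theta)w(t)]+A_3^{*}(\theta)w(t)$. The clean move is to introduce $\Phi(t,\theta):=\psi_t(\theta)+A_2^{*}(\theta)w(t)$: the first relation gives the boundary value $\Phi(t,0)=\dot w(t)$, while the domain condition, which $x(t)$ satisfies for every $t$, gives $\Phi(t,-1)=A_{-1}^{*}\dot w(t)$. A short computation shows $\Phi$ obeys the transport equation $\partial_t\Phi+\partial_\theta\Phi = A_2^{*}(\theta)\dot w(t)+A_3^{*}(\theta)w(t)$. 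Integrating this along the characteristic $\theta=\tau-(s+1)$ that carries $\theta=-1$ at $\tau=s$ to $\theta=0$ at $\tau=s+1$ turns the left-hand side into $\Phi(s+1,0)-\Phi(s,-1)=\dot w(s+1)-A_{-1}^{*}\dot w(s)$, and the substitution $\sigma=\tau-s-1$ in the source integral produces exactly $\int_{-1}^0[A_2^{*}(\sigma)\dot w(s+1+\sigma)+A_3^{*}(\sigma)w(s+1+\sigma)]\,\dd\sigma$; renaming $s\mapsto t$ gives (\ref{eq:adj0}).

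I expect the main obstacle to be the rigor of the characteristic integration rather than its formal content. One must justify that along a genuine solution the profile $\psi_t(\cdot)$ is regular enough that $\Phi$ is $H^1$ in $\theta$ and the pointwise boundary values $\Phi(t,0)$ and $\Phi(t,-1)$ are meaningful, and one must be scrupulous about the sign and direction of the transport operator so that the characteristic genuinely sweeps the whole interval $[-1,0]$ in unit time, which is what generates the delay-one structure. The integration-by-parts step for the adjoint is routine but hinges on using the boundary constraint of $D(\A)$ correctly, so I would verify that the coefficient of $\varphi(-1)$ is handled consistently in both parts of the argument.
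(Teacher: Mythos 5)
The paper states this proposition without proof, citing \cite{Rabah_Sklyar_Rezounenko_2008,Rabah_Sklyar_2016}, and your argument is correct and is essentially the standard derivation used in those references: the duality pairing with integration by parts (using the constraint $v=\varphi(0)-A_{-1}\varphi(-1)$ to split the $\theta=0$ boundary term) yields both the formula for $\A^{*}$ and the two domain conditions, and introducing $\Phi(t,\theta)=\psi_t(\theta)+A_2^{*}(\theta)w(t)$ with the boundary identities $\Phi(t,0)=\dot w(t)$, $\Phi(t,-1)=A_{-1}^{*}\dot w(t)$ and integrating the transport equation along characteristics over unit time gives exactly \eqref{eq:adj0}. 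Your own rigor caveats are the right ones, and are resolved by noting that for $x_0\in D(\A^{*})$ the trajectory remains in $D(\A^{*})$, so $\Phi(t,\cdot)\in H^1(-1,0)$ and the pointwise boundary values are well defined.
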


\medskip

This means that the form of the adjoint system is not a simple transposition of the initial one (\ref{eq:1}). 
Let us now specify  the relation between the solutions of the neutral type equation
(\ref{eq:adj0}) related to the adjoint system~(\ref{eq:2.0*}) and the transposed neutral type equation\\[2ex]
$ \dot z(t) = A_{-1}^*\dot z(t-1) +$
\begin{eqnarray}\label{transposed}
  \int_{-1}^0 \left [A_2^*(\tau)\dot z(t+\tau),
 A_3^*(\tau) z(t+\tau)\right ]\dd \tau , 
\end{eqnarray}
with initial $z_0(\theta)$. Let $\A^\dag$ be the infinitesimal generator of the semigroup 
corresponding to equation (\ref{transposed}).

Let us put
$$
\begin{pmatrix}
w(t) \cr \psi_t(\theta)
\end{pmatrix} =
 \e^{{\A^*} t}\xi_0=
\e^{{\A^*} t} \begin{pmatrix}
w(0) \cr
\psi_0(\theta)
\end{pmatrix},
$$
and the conditions of 
$$
\begin{pmatrix}
v(t) \cr z_t(\theta)
\end{pmatrix}
 =
\begin{pmatrix}
w(t+1)-A_{-1}^*w(t) \cr w(t+1+\theta)
\end{pmatrix}=
\e^{{\A^\dag} t}
\begin{pmatrix}
v(0) \cr z_0(\theta)\end{pmatrix}
,
$$
where $z_0(\theta)= w(\theta+1)$ and $v(0)=z_0(0)-A_{-1}z_0(-1)$. We can 
give the explicit relation between the initial conditions $ \xi_0$ and
$x_0$:
$$
 \xi_0 = \begin{pmatrix}
w(0) \cr \psi_0(\theta)
\end{pmatrix}, \qquad x_0= \begin{pmatrix}
v(0) \cr z_0(\theta)
\end{pmatrix}.
$$
The formal  relation between these vectors is
$$
 \xi_0 = \begin{pmatrix}
w(0) \cr \psi_0(\theta)
\end{pmatrix}= \Phi x_0= \Phi
\begin{pmatrix}
w(1)-A_{-1}w(0) \cr w(\theta+1)
\end{pmatrix},
$$
and we have the following result.
\begin{theorem}{\cite{Rabah_Sklyar_2016}}\label{initial}
The operator $\Phi$ representing the relation between initial conditions $x_0$ and $\xi_0$
corresponding to  neutral type systems (\ref{eq:2.0*}) -- (\ref{eq:adj0}) and (\ref{transposed}) is linear bounded
and bounded invertible from $X_1^\mathrm{d}$ to $M_2$, where $X_1^\mathrm{d}$ is $D(\A^*)$ 
with the graph norm.
\end{theorem}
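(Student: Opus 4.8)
The plan is to realize $\Phi$ as an explicit change of representation expressed through the single trajectory $w(\cdot)$ that underlies both systems, and then to read off boundedness and bounded invertibility from the resulting componentwise formulas. Since $z_0(\theta)=w(\theta+1)$, the history component $z_0$ of $x_0$ encodes exactly $w$ on $[0,1]$; in particular $w(0)=z_0(-1)$ and $w(1)=z_0(0)$, so the first component of $\xi_0$ is the boundary trace $w(0)=z_0(-1)$, while the neutral combination of the transposed equation~(\ref{transposed}) reads $v(0)=z_0(0)-A_{-1}^*z_0(-1)$. Thus the ``point'' part of $\Phi$ consists of traces of $z_0$, which are the unbounded operations that the graph norm of $X_1^d$ must tame.

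The substantive step is to express the second component $\psi_0$ through $w$. Using Proposition~\ref{prop1}, the first line of $\dot x=\A^*x$ gives $\psi_t(0)=\dot w(t)-A_2^*(0)w(t)$; setting $\phi_t(\theta)=\psi_t(\theta)+A_2^*(\theta)w(t)$ (the combination lying in $H^1$ on the domain), the second line becomes the transport equation $\partial_t\phi_t+\partial_\theta\phi_t=A_2^*(\theta)\dot w(t)+A_3^*(\theta)w(t)$ with boundary value $\phi_t(0)=\dot w(t)$. Integrating along the characteristics from $(0,\theta)$ out to the outflow boundary $\theta=0$, which is reached at time $-\theta$, yields the closed formula
$$
\psi_0(\theta)=\dot w(-\theta)-A_2^*(\theta)w(0)-\int_0^{-\theta}\bigl[A_2^*(\theta+\tau)\dot w(\tau)+A_3^*(\theta+\tau)w(\tau)\bigr]\dd\tau,
$$
which exhibits $\psi_0$ as a Volterra-type transform of $w|_{[0,1]}$, i.e. of $z_0$ and $\dot z_0$. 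Together with the trace identities this is the explicit action of $\Phi$.

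Boundedness of $\Phi\colon X_1^d\to M_2$ then splits into two elementary estimates. The graph norm on $X_1^d$ controls the $H^1$-regularity of the history component $z_0$, so by the embedding $H^1\hookrightarrow C$ the traces $z_0(-1),z_0(0)$ are bounded by $\|x_0\|_{X_1^d}$, which handles the first component and $v(0)$; moreover $\|\dot z_0\|_{L_2}\le\|x_0\|_{X_1^d}$, so the reflection term $\dot w(-\theta)$ is bounded in $L_2$, and the integral term is bounded because its kernel, built from $A_2^*,A_3^*\in L_2$, is Hilbert--Schmidt. This gives $\|\psi_0\|_{L_2}\le\mathrm{const}\cdot\|x_0\|_{X_1^d}$.

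For bounded invertibility I would read the displayed relation, after the substitution $s=-\theta$, as a Volterra integral equation of the second kind for $\dot w$ on $[0,1]$ with data $\phi_0=\psi_0+A_2^*(\cdot)w(0)\in L_2$; a Volterra operator with $L_2$ kernel is quasinilpotent, so $(I-V)^{-1}$ is bounded on $L_2$ and recovers $\dot w\in L_2$, hence $w\in H^1[0,1]$ after integrating from the known value $w(0)$. This produces $z_0=w(\cdot+1)\in H^1$ and $v(0)=z_0(0)-A_{-1}^*z_0(-1)$, an element of $X_1^d$ with norm controlled by $\|\xi_0\|_{M_2}$; alternatively, once $\Phi$ is shown to be a bounded bijection between the Hilbert spaces $X_1^d$ and $M_2$, the bounded inverse theorem yields boundedness of $\Phi^{-1}$ at once. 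The step I expect to be the main obstacle is precisely this invertibility with its regularity bookkeeping: one must verify that the reconstruction from merely $M_2$-data gains exactly the $H^1$-regularity and the neutral compatibility relation defining $X_1^d$, and that $\Phi$ genuinely intertwines the two semigroups so that the formula derived at $t=0$ is globally consistent. Everything else reduces to routine estimates for Volterra operators with $L_2$ kernels.
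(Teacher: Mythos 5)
The paper does not prove this theorem; it is imported verbatim from \cite{Rabah_Sklyar_2016}, where the argument is exactly the one you reconstruct: an explicit formula for $\psi_0$ in terms of $w|_{[0,1]}=z_0(\cdot-1)$ obtained by integrating the transport part of $\dot x=\A^*x$ along characteristics, followed by boundedness of the resulting trace-plus-Volterra map on the graph norm and inversion of the second-kind Volterra equation with $L_2$ (Hilbert--Schmidt, hence quasinilpotent) kernel. Your derivation is correct and essentially the same approach as the source.
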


Let us now consider the reachability  operator of the transposed controlled system:
$$
 \dot x (t)= \A^\dag x(t)+ \C^\dag u(t),
$$
where $ \C^\dag = 
\begin{pmatrix}
C^*\cr 0
\end{pmatrix}$. This operator is given by
$$
\R_T^\dag u(\cdot) = \int_0^T \e^{\A^\dag(T-\tau)} \C^\dag u(\tau)\dd \tau.      
$$
The operator $\K$ may be written using $\R_T^\dag$ and the semi\-group $\e^{\A^\dag}$ of  system (\ref{transposed})
 as follows (see \cite{Rabah_Sklyar_2016}):
\begin{eqnarray}\label{Kdual}
\K x_0= \left \{
\begin{array}{lcl}
\R_T^{\dag *}\Phi x_0 & \mbox{if}  & \C 
x(t)= Cz(t-1),\\[1ex]
\R_T^{\dag *}\e^{\A^{\dag *}}\Phi x_0 & \mbox{if}  &\C x(t)= Cz(t).
\end{array}
\right.
\end{eqnarray}
We can now formulate the main result of this section.
\begin{theorem}{}\label{th:fobs}
System (\ref{eq:1}) with the output $y=Cz(t-1)$ is exactly (continuously)  finally observable if and only if
system (\ref{transposed}) is exactly null controllable. A necessary condition of exact final observability is given 
by two conditions:
\begin{enumerate}
 \item $\Ker \begin{pmatrix}
              \Delta_\A(\lambda) \cr C
             \end{pmatrix}= \{0\}$ for all $\lambda \in \Cset$,
\item $\Ker \begin{pmatrix}
              \lambda I -A_{-1} \cr C
             \end{pmatrix}= \{0\}$ for all $\lambda \in \Cset$, $\lambda \ne 0$.
\end{enumerate}
\end{theorem}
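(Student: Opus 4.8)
The plan is to prove the stated equivalence first and then obtain the two necessary conditions as a dualization of Theorem~\ref{th:main}. Both controllability and observability I would reduce to range inclusions via the Douglas theorem that already underlies Theorem~\ref{th:excont}. Applied to the transposed control system $\dot x=\A^\dag x+\C^\dag u$ with $\C^\dag=\begin{pmatrix}C^*\cr0\end{pmatrix}$, exact null controllability becomes the inclusion $\mathrm{Im}\,\e^{\A^\dag T}\subset\mathrm{Im}\,\R_T^\dag$, equivalently the estimate $\|\R_T^{\dag*}\eta\|\ge\delta\|\e^{\A^{\dag*}T}\eta\|$ for some $\delta>0$ and all $\eta$. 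Final observability (Definition~\ref{def1}), i.e. $\|\K x_0\|_{L_2}\ge\gamma\|\e^{\A T}x_0\|$, is by the same criterion the inclusion $\mathrm{Im}\,\e^{\A^* T}\subset\mathrm{Im}\,\K^*$.

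The bridge between the two is formula~(\ref{Kdual}): for the output $y=Cz(t-1)$ it gives the factorization $\K=\R_T^{\dag*}\Phi$, and hence $\K^*=\Phi^*\R_T^\dag$. By Theorem~\ref{initial} the operator $\Phi$ is bounded and boundedly invertible between $X_1^\mathrm{d}$, $M_2$ and the associated dual spaces, so $\mathrm{Im}\,\K^*$ and $\mathrm{Im}\,\R_T^\dag$ differ only by a bounded invertible map. The construction preceding~(\ref{Kdual}) moreover shows that $\Phi$ intertwines the adjoint evolution $\e^{\A^* t}$ with the transposed evolution $\e^{\A^\dag t}$ (i.e. $\e^{\A^* t}\Phi=\Phi\,\e^{\A^\dag t}$), so the images of the two final-state maps correspond under the same operator. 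Combining these, the inclusion $\mathrm{Im}\,\e^{\A^* T}\subset\mathrm{Im}\,\K^*$ is carried to $\mathrm{Im}\,\e^{\A^\dag T}\subset\mathrm{Im}\,\R_T^\dag$, which is the desired equivalence. It is exactly the output $Cz(t-1)$, rather than $Cz(t)$, that yields the clean factor $\K=\R_T^{\dag*}\Phi$ and avoids the extra shift $\e^{\A^{\dag*}}$ present in the second line of~(\ref{Kdual}).

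For the necessary conditions I would apply Theorem~\ref{th:main} to the transposed system~(\ref{transposed}), whose neutral matrix is $A_{-1}^*$ and whose control matrix is $C^*$. Theorem~\ref{th:main} then forces $\mathrm{rank}\begin{pmatrix}\Delta_{\A^\dag}(\lambda)&C^*\end{pmatrix}=n$ for all $\lambda$ and $\mathrm{rank}\begin{pmatrix}\mu I-A_{-1}^*&C^*\end{pmatrix}=n$ for all $\mu\ne0$, where $\Delta_{\A^\dag}$ is the characteristic matrix of~(\ref{transposed}). A direct computation from the formula in Theorem~\ref{prop5} gives $\Delta_{\A^\dag}(\lambda)=\Delta_\A(\bar\lambda)^*$, hence $\Delta_{\A^\dag}(\lambda)^*=\Delta_\A(\bar\lambda)$. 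Using $\mathrm{rank}\begin{pmatrix}M&N\end{pmatrix}=\mathrm{rank}\begin{pmatrix}M^*\cr N^*\end{pmatrix}$, full row rank $n$ of $\begin{pmatrix}\Delta_{\A^\dag}(\lambda)&C^*\end{pmatrix}$ is equivalent to $\Ker\begin{pmatrix}\Delta_\A(\bar\lambda)\cr C\end{pmatrix}=\{0\}$, which over all $\lambda\in\Cset$ is condition~1, and likewise $\mathrm{rank}\begin{pmatrix}\mu I-A_{-1}^*&C^*\end{pmatrix}=n$ becomes $\Ker\begin{pmatrix}\lambda I-A_{-1}\cr C\end{pmatrix}=\{0\}$ for $\lambda\ne0$, which is condition~2.

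The routine part is this last dualization. The delicate point is the second paragraph: exact null controllability here is understood in $X_{-1}$ (compare Corollary~\ref{cor:1}), whereas observability is posed on $D(\A)$ with the $M_2$-norm on the right, so one must track the pivot chain $X_1\subset M_2\subset X_{-1}$ together with its dual and verify that the bounded invertible $\Phi$ (respectively $\Phi^*$) really does carry the image of $\e^{\A^* T}$ onto that of $\e^{\A^\dag T}$ in the correct spaces. Settling this correspondence, for which one relies on the analysis of~(\ref{Kdual}) and of $\Phi$ in \cite{Rabah_Sklyar_2016}, is the main obstacle; once it is in place the equivalence and the necessary conditions follow formally.
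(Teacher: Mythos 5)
Your proposal is correct and follows essentially the same route as the paper: the factorization $\K=\R_T^{\dag*}\Phi$ from \eqref{Kdual}, the bounded invertibility and intertwining property of $\Phi$ (Theorem~\ref{initial}), the Douglas-type range-inclusion/inequality criterion for exact null controllability, and dualization of the rank conditions of Theorem~\ref{th:main} into the kernel conditions. The paper's own proof only writes out the ``if'' direction as an inequality chain and leaves the necessary conditions implicit, whereas you make the converse and the dualization explicit, but the ingredients are identical.
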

\begin{proof}{}
 According to  relation (\ref{Kdual}) we have
$$
\Vert \K x_0 \Vert_{L_2} = \left (\int_0^T\left \Vert C^*\e^{\A^{\dag *}(T-\tau)} \Phi x_0  \right \Vert^2 \dd \tau \right)^{\frac{1}{2}}.
$$
As  system (\ref{transposed}) is exactly null controllable, we obtain
$$
\Vert \K x_0 \Vert_{L_2} \ge \delta \left \Vert \e^{\A^{\dag *}T} \Phi x_0 \right \Vert,
$$
for all  $x_0 \in D(\A)$. It is easy to see from \cite{Rabah_Sklyar_2016} that
$$
\e^{\A^{\dag *}T} \Phi x_0= \Phi  \e^{\A^{\dag *}(T-\tau)} x_0 = \e^{\A T}x_0.
$$
This gives
$$
\Vert \K x_0 \Vert_{L_2} \ge  \delta \Vert \e^{\A T} x_0 \Vert,
$$
which means that exact final observability holds.
\end{proof}
For the case of the output $y=Cz(t)$ we cannot say anything if $\det (\A_{-1}) = 0$. If 
$\A_{-1}$ is not singular, then $\e^{\A t}$ is a group and exact final observability coincides with
exact observability  \cite{Rabah_Sklyar_2016}.
\section{Examples}
To illustrate our results and hypothesis we give here 3 examples. The first one shows that for continuous obser\-vability a delay in the output is needed
if the semigroup is not a group. The second one is taken from \cite{Metelskiy_Minyuk_2006} and it is shown that in fact we have exact controllability (not only exact 
null controllability).
The last example illustrates our Conjecture on equivalence between exact controllability and complete stabilizability.

All   examples are given in the form of a system with one discrete delay:
\begin{equation}\label{examp}
\dot z (t)= A_{-1} z(t-1)+ A_0 z(t) + A_1 z(t-1) + Bu(t).
\end{equation}
\subsection*{Example 3.} System (\ref{examp}) with
$$
A_0= \begin{pmatrix}0 & 1 \cr 0&0\end{pmatrix},\  A_1= 0, \ A_{-1}=\begin{pmatrix}
                                                     0 & 1 \cr 0&0
\end{pmatrix}, \ B=\begin{pmatrix}0 \cr 1\end{pmatrix}.
$$
It is easy to see that, for all $ \lambda \in \Cset$,
$$
  \mathrm{rank} \begin{pmatrix}\Delta_{\A}(\lambda) & B\end{pmatrix}=
 \begin{pmatrix}\lambda & -\lambda \e^{-\lambda }- 1 & 0 \cr
0&\lambda & 1 \end{pmatrix} = 2.
$$
Moreover,  for all $\lambda \in \Cset$, 
$\mathrm{rank} \begin{pmatrix}\lambda I - A_{-1} & B \end{pmatrix}=n$, then the  system is exactly controllable (not only to zero).
The transposed system
$$
\left \{
\begin{array}{rcl}
 \dot z_1(t) &= &0\cr
\dot z_2(t)& =& \dot z_1(t-1) + z_1(t)
\end{array}\right.
$$
is continuously observable with the output $y=z_2(t-1)$ but not with $y(t)=z_2(t)$.

\subsection*{Example 4.} 
The following  system was given for exact null controllability and continuous final 
observability in \cite{Metelskiy_Minyuk_2006}:
$$
A_0=0, \ A_1= \begin{pmatrix}0 & 1 \cr 0&0\end{pmatrix}, \  A_{-1}=
\begin{pmatrix}0 & -1 \cr 0&1\end{pmatrix}.
$$
In fact, for this system the initial condition is exactly observable by the output 
$$
y=Cz(t-1), \quad C=\begin{pmatrix}1 & 0\end{pmatrix},
$$ 
and the transposed system is exactly controllable because, for all $\lambda \in \Cset$,
$$
 \mathrm{rank} \begin{pmatrix}\lambda I - A_{-1}^* & C^*\end{pmatrix} = 
\mathrm{rank}\begin{pmatrix}\lambda & 0 & 1 \cr 1  & \lambda -1 & 0\end{pmatrix} = 2. 
$$
However, the initial system  is not exactly observable by the output  $y= Cz(t)= z_1(t)$, because the initial function 
$z_0(\theta), \theta \in [0,1[$ cannot be determined.
\subsection*{Example 5.} System (\ref{examp}) with
$$
A_0= \begin{pmatrix}0 & 0 \cr 1&0
      \end{pmatrix}
, \ A_1= 0, \  A_{-1}=\begin{pmatrix}1 & 0 \cr 0&0
                            \end{pmatrix},
\ B=\begin{pmatrix}1 \cr 0
         \end{pmatrix}.
$$
We have for all $\lambda \in \Cset$,
$$
 \mathrm{rank} \begin{pmatrix}
\Delta_{\A}(\lambda) & B \end{pmatrix}
=
\begin{pmatrix}
\lambda  - \lambda \e^{-\lambda } & 0& 1 \cr
-1 &\lambda & 0
 \end{pmatrix} = 2,
$$
and for all complex  $\lambda \ne 0$,
$$
 \mathrm{rank} \begin{pmatrix}\lambda I - A_{-1} & B
 \end{pmatrix}
 = 
\begin{pmatrix}\lambda -1 & 0 & 1\cr
0 &\lambda & 0
\end{pmatrix}= 2. 
$$
The system is exactly null controllable by Lemma~\ref{lemma} and result in \cite{Metelskiy_Minyuk_2006}. It is completely stabilizable by Theorem~\ref{fbl}. 
Consider now the transposed system 
$$
\left \{
\begin{array}{rcl}
 \dot z_1(t) &= &z_2(t),\cr
\dot z_2(t)& =& \dot z_2(t-1).
\end{array}\right.
$$
This system is continuously finally observable by the feedback $y=z_1(t-1)$ by Theorem~\ref{th:fobs}.

\section{Conclusion} We gave some relations  between exact null controllability and complete stabilizability
of abstract systems in Hilbert spaces.  A characterization of complete stabilizability  has been given for a large class of
linear neutral type systems. Necessary conditions of exact null controllability are given, which conjectured to be also 
sufficient for neutral type systems, even if they are not in the general case. 
This also enables    the final continuous observability of such systems to be characterized.
The following step is to prove the conjecture and to extend such results to the problem of detectability, which is dual with stabilizability. 
 \addtolength{\textheight}{-5ex}   

\begin{center}
 \textbf{Acknowledgment}
\end{center}

An earlier version of this work was presented  at the International Conference
 ``System Analysis: Mod\-elling an Con\-trol'', 
organized in the memory of Ac\-ademician Arkady Kryazhimskiy at the Krasovskii Ins\-titute of Math\-ematics, 
Ural Branch of RAS, Ekaterinburg, Russia. The first author thanks Professor Vyacheslav Maksimov 
and all the Organizing Com\-mittee of the Conference.

\begin{thebibliography}{10}

\bibitem{Curtain_L_T_Z_1997}
Ruth~F. {Curtain}, Hartmut {Logemann}, Stuart {Townley}, and Hans {Zwart}.
\newblock {Well-posedness, stabilizability, and admissibility for
  Pritchard-Salamon systems.}
\newblock {\em {J. Math. Syst. Estim. Control}}, 4(4):493--496, 1997.

\bibitem{Curtain_Zwart_1995}
Ruth~F. Curtain and Hans Zwart.
\newblock {\em An introduction to infinite-dimensional linear systems theory},
  volume~21 of {\em Texts in Applied Mathematics}.
\newblock Springer-Verlag, New York, 1995.

\bibitem{Douglas_1966}
R.~G. Douglas.
\newblock On majorization, factorization, and range inclusion of operators on
  {H}ilbert space.
\newblock {\em Proc. Amer. Math. Soc.}, 17:413--415, 1966.

\bibitem{Dusser_Rabah_2001}
Xavier Dusser and Rabah Rabah.
\newblock On exponential stabilizability of linear neutral systems.
\newblock {\em Math. Probl. Eng.}, 7(1):67--86, 2001.

\bibitem{Guo_Z_H_2003}
Faming {Guo}, Qiong {Zhang}, and Falun {Huang}.
\newblock {Well-posedness and admissible stabilizability for Pritchard-Salamon
  systems.}
\newblock {\em {Appl. Math. Lett.}}, 16(1):65--70, 2003.

\bibitem{Hale_Verduyn_1993}
Jack~K. Hale and Sjoerd~M. {Verduyn Lunel}.
\newblock {\em Introduction to functional-differential equations}, volume~99 of
  {\em Applied Mathematical Sciences}.
\newblock Springer-Verlag, New York, 1993.

\bibitem{Hale_Verduyn_2002}
Jack~K. Hale and Sjoerd~M. {Verduyn Lunel}.
\newblock Strong stabilization of neutral functional differential equations.
\newblock {\em IMA J. Math. Control Inform.}, 19(1-2):5--23, 2002.
 

\bibitem{Ito_Tarn_1985}
Kazufumi Ito and T.~J. Tarn.
\newblock A linear quadratic optimal control for neutral systems.
\newblock {\em Nonlinear Anal.}, 9(7):699--727, 1985.

\bibitem{Khartovskii_Pavlovskaya_2013}
V.~E. Khartovski{\u\i} and A.~T. Pavlovskaya.
\newblock Complete controllability and controllability for linear autonomous
  systems of neutral type.
\newblock {\em Automation and Remote Control}, 74(5):769--784, 2013.

\bibitem{Kuperman_Repin_1971}
L.~M. Kuperman and Ju.~M. Repin.
\newblock On the question of controllability in infinite-dimensional spaces.
\newblock {\em Dokl. Akad. Nauk SSSR}, 200:767--769, 1971. English translation in 
{\em Soviet Mathematics--Doklady} {\bf 12}(5):~1469--1472.

\bibitem{Louis_Wexler_1983}
J.-C. Louis and D.~Wexler.
\newblock On exact controllability in {H}ilbert spaces.
\newblock {\em J. Differential Equations}, 49(2):258--269, 1983.

\bibitem{Megan_1975} M. Megan. 
\newblock On the stabilizability and controllability of linear dissipative systems in Hilbert space, 
{\em Seminarul de Ecuatii Functional, Universitatea din Timisoara}, {\bf 32}(2016):~1--15. 

\bibitem{Metelskiy_Khartovskii_2016}
A.~V. Metel'skii and V.~E. Khartovskii.
\newblock Criteria for modal controllability of linear systems of neutral type.
\newblock {\em Differ. Equ.}, 52(11):1453--1468, 2016.
\newblock Translation of Differ. Uravn. {{\bf{5}}2} (2016), no. 11, 1506--1521.

\bibitem{Metelskiy_Minyuk_2006}
A.~V. Metel{\cprime}ski{\u\i} and S.~A. Minyuk.
\newblock Criteria for the constructive identifiability and complete
  controllability of linear time-independent systems of neutral type.
\newblock {\em Izv. Ross. Akad. Nauk Teor. Sist. Upr.}, (5):15--23, 2006. (Russian). English translation in 
{\em Journal of Computer and System Sciences International} {\bf 45}(5):~690–-698.

\bibitem{Michiels_Niculescu_2007}
Wim Michiels and Silviu-Iulian Niculescu.
\newblock {\em Stability and stabilization of time-delay systems. An
  eigenvalue-based approach}, volume~12 of {\em Advances in Design and
  Control}.
\newblock Society for Industrial and Applied Mathematics (SIAM), Philadelphia,
  PA, 2007.

\bibitem{Oconnor_Tarn_1983}
Daniel~A. O'Connor and T.~J. Tarn.
\newblock On stabilization by state feedback for neutral differential
  equations.
\newblock {\em IEEE Trans. Automat. Control}, 28(5):615--618, 1983.

\bibitem{Pandolfi_1976}
L.~Pandolfi.
\newblock Stabilization of neutral functional differential equations.
\newblock {\em J. Optimization Theory Appl.}, 20(2):191--204, 1976.

\bibitem{Olbrot_Pandolfi_1988}
L.~Pandolfi and A.~W. Olbrot.
 \newblock 
Null controllability of a class of functional-differential systems,
{\em International Journal of Control} {\bf 47}(1):~193--208.

\bibitem{Pazy_1983}
A.~Pazy.
\newblock {\em Semigroups of linear operators and applications to partial
  differential equations}, volume~44 of {\em Applied Mathematical Sciences}.
\newblock Springer-Verlag, New York, 1983.

\bibitem{Pritchard_Salamon_1987}
A.~J. Pritchard and D.~Salamon.
\newblock The linear quadratic control problem for infinite dimensional systems
  with unbounded input and output operators.
\newblock {\em SIAM J. Control and Optimization}, 25(1):121--144, 1987.

\bibitem{Rabah_Sklyar_2016}
R.~Rabah and G.~M. Sklyar.
\newblock Exact observability and controllability for linear neutral type
  systems.
\newblock {\em Systems Control Lett.}, 89:8--15, 2016.

\bibitem{Rabah_Sklyar_Barkhayev_2016}
R.~Rabah, G.~M. Sklyar, and P.~Yu. Barkhayev.
\newblock On exact controllability of neutral time-delay systems.
\newblock {\em Ukrainian Math. Journal}, 68(6):800--815, 2016.

\bibitem{Rabah_Sklyar_Rezounenko_2005}
R.~Rabah, G.~M. Sklyar, and A.~V. Rezounenko.
\newblock Stability analysis of neutral type systems in {H}ilbert space.
\newblock {\em J. Differential Equations}, 214(2):391--428, 2005.

\bibitem{Rabah_Sklyar_Rezounenko_2008}
R.~Rabah, G.~M. Sklyar, and A.~V. Rezounenko.
\newblock On strong regular stabilizability for linear neutral type systems.
\newblock {\em J. Differential Equations}, 245(3):569--593, 2008.

\bibitem{Rabah_Karrakchou_1997}
{R}abah Rabah and {J}amila Karrakchou.
\newblock {On Exact Controllability and Complete Stabilizability for Linear
  Systems in Hilbert Spaces}.
\newblock {\em {Applied Mathematics Letters}}, 10(1):pp. 35--40, 1997.

\bibitem{Rabah_Sklyar_2007}
{R}abah Rabah and {G}rigory~M. Sklyar.
\newblock The analysis of exact controllability of neutral-type systems by the
  moment problem approach.
\newblock {\em SIAM J. Control Optim.}, 46(6):2148--2181, 2007.

\bibitem{Rabah_Sklyar_Barkhayev_2012}
{R}abah Rabah, {G}rigory~{M.} Sklyar, and {P}avel~{Yu.} Barkhayev.
\newblock Stability and stabilizability of mixed retarded-neutral type systems.
\newblock {\em ESAIM Control Optim. Calc. Var.}, 18(3):656--692, 2012.

\bibitem{Rhandi_2002}
A. Rhandi.
\newblock Spectral theory for positive semigroups and applications, Vol.~2 of
  {\em Quaderni di Matematica}, ESE - Salento University Publishing, Lecce.
\bibitem{Richard_2003}
Jean-Pierre Richard.
\newblock Time-delay systems: an overview of some recent advances and open
  problems.
\newblock {\em Automatica J. IFAC}, 39(10):1667--1694, 2003.

\bibitem{Salamon_Springer_1983}
D.~Salamon.
\newblock Neutral functional differential equations and semigroups of
  operators.
\newblock In {\em Control theory for distributed parameter systems and
  applications ({V}orau, 1982)}, volume~54 of {\em Lecture Notes in Control and
  Inform. Sci.}, pages 188--207. Springer, Berlin, 1983.

\bibitem{Salamon_1987}
D.~Salamon.
\newblock Infinite dimensional linear systems with unbounded control and
  observation: a functional analytical approach.
\newblock {\em Trans. American Math. Soc.}, 300(2):383--431, 1987.

\bibitem{Salamon_Pitman_1984}
Dietmar Salamon.
\newblock {\em Control and observation of neutral systems}, volume~91 of {\em
  Research Notes in Mathematics}.
\newblock Pitman (Advanced Publishing Program), Boston, MA, 1984.

\bibitem{Sklyar_Szkiebiel_2013}
Grigory~M. Sklyar and Grzegorz Szkibiel.
\newblock Controlling a non-homogeneous {T}imoshenko beam with the aid of the
  torque.
\newblock {\em Int. J. Appl. Math. Comput. Sci.}, 23(3):587--598, 2013.

\bibitem{Tucsnak_Weiss_2009}
Marius Tucsnak and George Weiss.
\newblock {\em {Observation and control for operator semigroups.}}
\newblock {Birkh\"auser Advanced Texts. Basler Lehrb\"ucher. Basel:
  Birkh\"auser. xi, 483~p.}, 2009.

\bibitem{Zabczyk_1976}
J.~Zabczyk.
\newblock Remarks on the algebraic {R}iccati equation in {H}ilbert space.
\newblock {\em Appl. Math. Optim.}, 2(3):251--258, 1975/76.

\bibitem{Zabczyk_1992}
Jerzy Zabczyk.
\newblock {\em Mathematical control theory: an introduction}.
\newblock Systems \& Control: Foundations \& Applications. Birkh\"auser Boston,
  Inc., Boston, MA, 1992.

\bibitem{Zeng_Yi_Xie_2013}
Yi~Zeng, Zuoshi Xie, and Faming Guo.
\newblock On exact controllability and complete stabilizability for linear
  systems.
\newblock {\em Appl. Math. Lett.}, 26(7):766--768, 2013.

\end{thebibliography}
\def\cprime{$'$} \def\cprime{$'$}

\end{document}